\newtheorem{theorem}{Theorem}[section]
\newtheorem{prop}[theorem]{Proposition}
\newtheorem{lemma}[theorem]{Lemma}
\newtheorem{Theorem A}[theorem]{Theorem A}
\newtheorem{remark}[theorem]{Remark}
\newenvironment{rem}{\begin{remark}\rm}{\end{remark}}
\newtheorem{example}[theorem]{Example}
\def\la{\lambda}
\def\Z{{\mathbb Z}}
\def\P{{\mathbb P}}
\def\Q{{\mathbb Q}}
\def\cx{{\mathbb C}}
\def\L{{\cal L}}
\def\F{{\cal F}}
\def\M{{\cal M}}
\def\N{{\cal N}}
\def\O{{\cal O}}
\def\D{{\cal D}}
\def\Z{{\cal Z}}
\def\wh#1{\widehat{#1}}
\def\CP{{\cal P}}
\newcommand{\tnorm}[1]{%
  \left\vert\kern-0.9pt\left\vert\kern-0.9pt\left\vert #1
    \right\vert\kern-0.9pt\right\vert\kern-0.9pt\right\vert_{1,p}  }
    \newcommand{\twonorm}[1]{%
  \left\vert\kern-0.9pt\left\vert\kern-0.9pt\left\vert #1
    \right\vert\kern-0.9pt\right\vert\kern-0.9pt\right\vert_{1,2}  }
\newcommand{\pnorm}[1]{%
  \left\vert\kern-0.9pt\left\vert\kern-0.9pt\left\vert #1
    \right\vert\kern-0.9pt\right\vert\kern-0.9pt\right\vert_{p}  }
\newcommand{\ttnorm}[2]{%
  \left\vert\kern-0.9pt\left\vert\kern-0.9pt\left\vert #1
    \right\vert\kern-0.9pt\right\vert\kern-0.9pt\right\vert_{1,p; #2}  }
\newcommand{\ppnorm}[2]{%
  \left\vert\kern-0.9pt\left\vert\kern-0.9pt\left\vert #1
    \right\vert\kern-0.9pt\right\vert\kern-0.9pt\right\vert_{p;#2}  }
\def\non{\noindent}
\def\Aut{{\rm Aut}}
\def\wh{\widehat}
\def\tcup{{\textstyle \bigcup}}
\def\tsqcup{{\textstyle \bigsqcup}}
\def\tprod{{\textstyle \prod}}
\def\tsum{{\textstyle \sum}}
\def\C{{\cal C}}
\def\CP{{\cal P}}
\def\X{{\cal X}}
\def\CZ{{\cal Z}}
\def\f{{\bf f}}
\def\p{{\bf p}}
\title{{\bf Degree three spin Hurwitz numbers}}
\author{Junho Lee }
\date{\empty}
\begin{document}

\maketitle

\begin{abstract}
\medskip
Recently, Gunningham \cite{G} calculated all spin Hurwitz numbers in terms of combinatorics of Sergeev algebra.
In this paper, we use a spin curve degeneration  to obtain a recursion formula for
degree three spin Hurwitz numbers.

\end{abstract}

\vskip.15in

\setcounter{section}{0}

Let $D$ be a complex curve of genus $h$ and $N$ be a theta characteristic on $D$, i.e. $N^2=K_D$.
The pair $(D,N)$ is called a {\em spin curve} of genus $h$ with parity $p\equiv h^0(N)$ (mod 2).
For $i=1,\cdots,k$, let $m^i = (m^i_1,\cdots,m_{\ell_i}^i)$ be an odd partition of $d>0$, namely
all components $m^i_j$ are odd.
Fix $k$ points $q^1,\cdots,q^k$ in $D$ and
consider
degree $d$  maps $f:C\to D$ from possibly disconnected domains $C$ of Euler characteristic $\chi$
that are ramified only over the fixed points $q^i$ with ramification data $m^i$.
Observe that the Riemann-Hurwitz formula shows
\begin{equation}\label{RHformula}
2d(1-h)-\chi+ \sum_{i=1}^k \big(\ell(m^i)-d)\ =\   0
\end{equation}
where $\ell(m^i)=\ell_i$ is the length of $m^i$.
By the Hurwitz formula,
the twisted line bundle
\begin{equation}\label{TwistedTheta}
L_f\ =\ f^*N\otimes \O\big(\,\sum_{i,j} \tfrac12 (m^i_j-1) x^i_j\,\big)
\end{equation}
is a theta characteristic on $C$ where
$f^{-1}(q^i)=\{x^i_j\}_{1\leq j\leq \ell_i}$ and $f$ has  multiplicity $m^i_j$ at $x^i_j$.
We define the parity $p(f)$ of a map $f$ by
\begin{equation}\label{parity}
p(f)\ \equiv\  h^0(L_f)\ \ \ \ \ \mbox{(mod 2)}.
\end{equation}
Given odd partitions $m^1,\cdots,m^k$ of $d$,
the spin Hurwitz number of genus $h$ and parity $p$ is defined as a (weighted) sum of (ramified) covers $f$
satisfying (\ref{RHformula}) with sign determined by the parity
$p(f)$:
\begin{equation}\label{spinHurwitz}
H_{m^1,\cdots,m^k}^{h,p}\ =\ \sum_f \frac{(-1)^{p(f)}}{|\Aut(f)|}
\end{equation}
Eskin, Okounkov and  Pandharipande \cite{EOP} calculated the genus $h=1$ and odd parity spin Hurwitz numbers  in terms of characters of Sergeev group.
Recently, Gunningham \cite{G} calculated all spin Hurwitz numbers in terms of combinatorics of Sergeev algebra.

\medskip

The trivial partition $(1^d)$ of $d$ is a  partition  whose components are all one.
If $m^k=(1^d)$, then $f$ has no ramification points over the fixed point $q^k$ and hence we have
\begin{equation}\label{trivial}
H^{h,p}_{m^1,\cdots,m^{k-1},(1^d)}\ =\ H^{h,p}_{m^1,\cdots,m^{k-1}}.
\end{equation}
When all partitions $m^i=(1^d)$, denote
the spin Hurwitz numbers (\ref{spinHurwitz}) by $H^{h,p}_d$.
These  are
dimension zero local GW invariants $GT_d^{loc,h,p}$ of spin curve $(D,N)$ that give all dimension zero  GW invariants of K\"{a}hler surfaces
 with a smooth canonical divisor (cf. \cite{KL1}, \cite{KL2}, \cite{LP1}, \cite{MP}).
For notational simplicity, we set $H^{h,p}_{(3)^0}=H^{h,p}_3$ and for $k\geq 1$ write
$$
H_{(3)^k}^{h,p}
$$
for the spin Hurwitz numbers
$H_{(3),\cdots,(3)}^{h,p}$  with the same $k$  partitions $(3)$.
Since there are two odd partitions $(1^3)$ and (3) of $d=3$, by (\ref{trivial}) it suffices to compute $H_{(3)^k}^{h,p}$ for $k\geq 0$.
The aim of this paper is to use a spin curve degeneration  to obtain  the following recursion formula.

\begin{theorem}\label{Main}
If $h=h_1+h_2$ and $p\equiv p_1+p_2$ (mod 2) then  for $k_1+k_2=k$
\begin{equation}\label{main1}
H_{(3)^k}^{h,p}\ =\
3!\,H_{(3)^{k_1}}^{h_1,p_1}\cdot H_{(3)^{k_2}}^{h_2,p_2}  \ +\
3\,H_{(3)^{k_1+1}}^{h_1,p_1}\cdot H_{(3)^{k_2+1}}^{h_2,p_2}.
\end{equation}
\end{theorem}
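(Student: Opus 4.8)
\noindent The plan is to pick a general point $q\in D$, degenerate the spin curve $(D,N)$ to a nodal spin curve supported on a union $D_1\cup_q D_2$ with $D_i$ of genus $h_i$, and read off $H^{h,p}_{(3)^k}$ from the limiting degree three admissible covers, using the deformation invariance of these invariants (for $k=0$ they are the local Gromov--Witten invariants $GT^{loc}$ recalled above, and in general the analogous degeneration applies). Concretely I would take a one-parameter family $(\mathcal D_t,\mathcal N_t)$ with general fiber $(D,N)$ of genus $h$ and parity $p$ and central fiber a nodal spin curve whose Cornalba stable model is $D_1\cup_q D_2$, carrying the $k$ marked branch points on disjoint sections with $k_1$ of them limiting into $D_1$ and $k_2$ into $D_2$, all avoiding the node. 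For a \emph{separating} node, Cornalba's description of the boundary of the spin moduli space shows that a limiting spin structure restricts to honest theta characteristics $\mathcal N_i$ on the $D_i$ (with $\mathcal N_i^{\otimes 2}=K_{D_i}$), that $h^0$ is additive across the node, hence $p\equiv p_1+p_2$ (mod $2$) with $p_i\equiv h^0(\mathcal N_i)$, and that every admissible splitting $p\equiv p_1+p_2$ is realized by some boundary point --- which is what will give the stated independence of (\ref{main1}) from the choice of $p_1,p_2$.

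\noindent Next I would classify the degree three admissible covers of $D_1\cup_q D_2$ with profile $(3)$ over the $k_1+k_2$ marked points. Over the node such a cover has a ramification profile $\mu\vdash 3$, and the local monodromy of the cover at the node, computed from the $D_1$ side, is a product of three-cycles (one per marked point of $D_1$) and commutators (from the handles of $D_1$); being an even permutation, it lies in $A_3$, so only $\mu=(1^3)$ and $\mu=(3)$ can occur. When $\mu=(1^3)$ the cover is $f_1\sqcup f_2$ with $f_i\colon C_i\to D_i$ of degree three, unramified over $q$ and with profile $(3)$ over the $k_i$ marked points, the two triples of points over $q$ joined by a bridge of three rational curves; a groupoid count of the $3!$ bridge-identifications (modulo $\Aut f_1\times\Aut f_2$) gives total weight $3!/(|\Aut f_1|\,|\Aut f_2|)$, and using (\ref{trivial}) the contribution is $3!\,H^{h_1,p_1}_{(3)^{k_1}}\cdot H^{h_2,p_2}_{(3)^{k_2}}$. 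When $\mu=(3)$ each $f_i$ acquires an extra totally ramified point $x_i$ over $q$, so $f_i$ is counted by $H^{h_i,p_i}_{(3)^{k_i+1}}$, and the node contributes a single rational bridge $\P^1\xrightarrow{z\mapsto z^3}\P^1$; the same count, now including the order-three deck group of the bridge, gives weight $3/(|\Aut f_1|\,|\Aut f_2|)$ and contribution $3\,H^{h_1,p_1}_{(3)^{k_1+1}}\cdot H^{h_2,p_2}_{(3)^{k_2+1}}$. Summing the two cases produces (\ref{main1}), \emph{provided} the parity signs multiply under gluing.

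\noindent That last point is the crux, and the step I expect to be hardest: one must show $(-1)^{p(f)}=(-1)^{p(f_1)}(-1)^{p(f_2)}$ for a limiting cover $f$, for both values of $\mu$, where $p(f_i)$ is the parity (\ref{parity}) of the twisted theta characteristic $L_{f_i}$ of $f_i$ relative to $\mathcal N_i$. The line bundle $L_f$ of (\ref{TwistedTheta}) on the nodal cover $C$ restricts on the two main components to $L_{f_1},L_{f_2}$ up to a twist concentrated at the node (trivial for $\mu=(1^3)$, of degree $-1$ at $x_i$ for $\mu=(3)$), and on the exceptional bridge it is an explicit line bundle on $\P^1$ that must be tracked. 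The argument combines (i) deformation invariance of the parity of a theta characteristic, so that $p(f)$ equals the parity of the smooth nearby $L_{f_t}$; (ii) the normalization sequence for $C$ together with Serre duality, expressing $h^0(L_f)$ through the $h^0$ of the pieces, the number of gluing nodes, and the corank of the gluing map; and (iii) the additivity of $h^0$ across the separating node already used for the spin curve itself. Balancing the bridge term, the base-point behavior of the $\O(-x_i)$ twist, and the gluing corank in the $\mu=(3)$ case so that the signs come out exactly multiplicative is the delicate computation; once it is carried out, the theorem follows, and the independence from the splitting $p\equiv p_1+p_2$ is automatic since every splitting computes the same invariant $H^{h,p}_{(3)^k}$.
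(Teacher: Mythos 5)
Your overall strategy---degenerate the target to a nodal curve over a disk, classify the limiting degree-three covers by their ramification profile $\mu$ over the node, count each type with the correct combinatorial weight, and reduce everything to showing parity is multiplicative under gluing---is the same as the paper's. Your exclusion of $\mu=(2,1)$ by observing that the node monodromy lies in $A_3$ is a valid alternative to the paper's dimension count (Lemma~\ref{L:CModuli}), and your weights $3!$ and $3$ agree with what falls out of Lemmas~\ref{SHbyRelModuli}, \ref{NumM0}, \ref{GluingMap} and the gluing-multiplicity formula (\ref{number-fiber}).

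However, there is a genuine gap at exactly the place you flag. You correctly identify the multiplicativity $(-1)^{p(f_{\text{smooth}})}=(-1)^{p(f_1)}(-1)^{p(f_2)}$ as the crux, but you do not prove it; you sketch a route via the normalization sequence, Serre duality and ``gluing coranks'' and then write that the theorem follows ``once it is carried out.'' That unproved step is the actual content of the paper (Sections~\ref{section4}--\ref{section5}). Moreover the route you propose has a hidden obstruction that your items (i)--(iii) do not address: deformation invariance of the parity applies to a spin structure on a family of curves with \emph{smooth total space} (or, on a family of nodal curves, to a Cornalba spin structure), and in the $\mu=(3)$ case the naive one-parameter family of domain curves smoothing the nodes is \emph{not} smooth---the node $uv=\lambda$ upstairs is smoothed to multiplicity $3$, so the total space has an $A_2$ singularity. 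The paper resolves this by a Schiffer variation (base change $\lambda=s^n$, Proposition~\ref{TwistingProp}), which yields a smooth family $\C_\zeta\to\Delta$, and then must \emph{twist} the pulled-back spin bundle, $\L_\zeta = \F_\zeta^*\N\otimes\O(\tfrac12 X_\zeta + \tfrac{n-1}{2}C_0)$, and verify that $(\L_\zeta,\Psi_\zeta)$ is a Cornalba spin structure on this new family (Lemmas~\ref{TC-induced}, \ref{SS-induce}) before invoking Cornalba's parity invariance. Your sketch neither performs the base change nor the twist, so it would not compile into a proof as stated. Also, a minor inconsistency: you set the target as the one-node curve $D_1\cup_q D_2$ but then describe covers with rational bridges over $q$; for a separating node the Cornalba limit spin curve necessarily carries the exceptional $\P^1$, i.e.\ the target should be $D_1\cup E\cup D_2$ with two nodes from the start, as in the paper.
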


\medskip
One can use  Theorem~\ref{Main} and the result of \cite{EOP} to explicitly compute the degree $d=3$  spin Hurwitz numbers.
In Proposition~\ref{computation}, we show that
\begin{equation}\label{E:computation}
H_{(3)^k}^{h,\pm}\ =\ 3^{2h-2}\big[\,(-1)^k2^{k+h- 1}\pm 1\,\big]
\end{equation}
where $+$ and $-$ denote the even and odd parities.
When the degree $d=1,2$, the dimension zero  local GW invariants are given by the formulas
$$
GT_1^{loc,h,\pm}\ =\ \pm 1\ \ \ \ \ \mbox{and}\ \ \ \ \ GT_2^{loc,h,\pm}\ =\ \pm 2^{h-1}
$$
(cf. Lemma~2.6 of \cite{L}). Since $GT_d^{loc,h,p}=H_d^{h,p}$ as mentioned above,  the formula (\ref{E:computation}) shows
\begin{equation*}\label{deg3lcaoGT}
GT_3^{loc,h,\pm}\ =\
3^{2h-2}(2^{h- 1}\pm 1).
\end{equation*}
This calculation is, in fact, the  main motivation for the paper.

\medskip
In Section~1, we express the degree $d$ spin Hurwitz numbers (\ref{spinHurwitz}) in terms of relative GW moduli spaces.
We can then apply a degeneration method for a family of curves $\D\to \Delta$ where
the central fiber $D_0$ is a nodal curve and the general fiber $D_\la$ ($\la\ne 0$) is a smooth curve.
Section~2 describes the relative moduli space $\M_0$ of maps  $f$ into the nodal curve $D_0$. In Section~3,
we show that
the union over $\la\in\Delta$ of relative moduli spaces $\M_\la$ of maps into $D_\la$ consists of
connected components $\Z_{m,f}\to \Delta$ containing $f\in \M_0$. Here  $m$ is the ramification data of $f$ over nodes of $D_0$ such that $d-\ell(m)$ is even.

The (ordinary) Hurwitz numbers are  sums of (ramified) maps modulo automorphism without sign.
One can easily obtain a recursion formula for Hurwitz numbers by counting maps in the general fiber of
$\Z_{m,f}\to \Delta$.
For spin Hurwitz numbers, one needs  to calculate parities of maps  induced from a fixed spin structure on the
family of curves $\D$.

The novelty of our approach is to apply a Schiffer variation for the parity calculation.
The space $\Z_{m,f}$ is, in general, not smooth.
In Section~4, we construct a smooth model for $\Z_{m,f}$ by  Schiffer variation. In Section~5, we use the smooth model to  twist the pull-back of the spin structure on  $\D$. When the degree $d=3$, the  partition $m$ is odd, either $(1^3)$ or $(3)$. In this case, a suitable twisting immediately yields a required parity calculation.
We prove Theorem~\ref{Main} in Section~6 and the formula (\ref{E:computation}) in Section~7.

For higher degree $d\geq 4$, the partition $m$ may not to be odd!
A new parity calculation is needed.
In  \cite{LP2},
we generalized the recursion formula (\ref{main1})
for higher degree spin Hurwitz numbers by employing additional geometric analysis approach
for parity calculation.

\vskip 1cm

\setcounter{equation}{0}
\section{Dimension zero relative GW moduli spaces  }
\label{section1}
\bigskip

In this section, we express the spin Hurwitz numbers (\ref{spinHurwitz}) in terms of dimension zero
relative GW moduli spaces. We will follow the definitions of \cite{IP2} for the relative GW theory.

\medskip
Let $D$ be a smooth curve of genus $h$ and let  $V=\{q^1,\cdots,q^k\}$ be a fixed set of points on $D$.
Given partitions $m^1,\cdots,m^k$ of $d$,
a degree $d$ holomorphic map $f:C\to D$ from a  possibly disconnected  curve $C$ is called
{\em $V$-regular} with  contact vectors $m^1,\cdots,m^k$ if $f^{-1}(V)$ consists of $\sum \ell(m^i)$ contact  marked points $x^i_j$ ($1\leq j\leq \ell(m^i)$)
with $f(x^i_j)=q^i$ such that  $f$ has  ramification index (or multiplicity)  $m^i_j$ at $x^i_j$.
Two $V$-regular maps $(f,C;\{x^i_j\})$ and $(\tilde{f},\tilde{C};\{\tilde{x}^i_j\})$
are equivalent  if  they are isomorphic, i.e.,
there is a biholomorphism $\sigma:C\to \tilde{C}$ with $\tilde{f}\circ \sigma=f$ and $\sigma(x^i_j)=\tilde{x}^i_j$ for all $i,j$.
The relative moduli space
\begin{equation}\label{RelativeModuli}
\M^{V}_{\chi,m^1,\cdots,m^k}(D,d)
\end{equation}
consists of equivalence classes of   $V$-regular maps $(f,C;\{x^i_j\})$ with
the Euler characteristic $\chi(C)=\chi$ and with contact vectors $m^1,\cdots,m^k$.
Since no confusion can arise, we will regard a point in the space (\ref{RelativeModuli})
 as a $V$-regular map  $(f,C;\{x^i_j\})$. For simplicity,
we will often  write a $V$-regular map  $(f,C;\{x^i_j\})$ simply as $f$.

The (formal) complex dimension of the  space (\ref{RelativeModuli})   is
given by the left-hand side of the Riemann-Hurwitz formula (\ref{RHformula}):
\begin{equation}\label{DimRelModuli}
2d(1-h) -\chi  - \sum_{i=1}^k \big(\,d-\ell(m^i)\,\big).
\end{equation}
Suppose this dimension is zero. Then,  for each $V$-regula map  $(f,C;\{x^i_j\})$ in (\ref{RelativeModuli}), forgetting the contact marked points $x^i_j$
 gives   a (ramified) cover $f$ that is ramified only over fixed points $q^i$ and satisfies (\ref{RHformula}).
The automorphism group $\Aut(f)$ of a (ramified) cover $f$ consists of automorphisms $\sigma\in \Aut(C)$ with $f\circ \sigma=f$. The automorphism group $\Aut(f,V)$ of  a $V$-regular map $(f,C;\{x^i_j\})$
 consists of  automorphisms $\sigma\in \Aut(f)$ with
$\sigma(x^i_j)=x^i_j$ for all $i,j$.

For a partition $m$ of $d$, let  $\Aut(m)$ be the subgroup of symmetric group $S_{\ell(m)}$ permuting equal parts of the partition $m$.

\begin{lemma}\label{SHbyRelModuli}
Let $m^1,\cdots,m^k$ be  as above and
suppose the dimension  (\ref{DimRelModuli}) is zero.
\begin{itemize}
\item[(a)]
If $m^i=(1^d)$ for some $1\leq i\leq k$, then $\Aut(f,V)$ is trivial for all $f$ in (\ref{RelativeModuli}).
\item[(b)]
If $m^1,\cdots,m^k$ are all odd partitions, then
\begin{equation*}\label{SH-Rmoduli}
H^{h,p}_{m^1,\cdots,m^k}\ =\ \frac{1}{\prod_{i=1}^k |\Aut(m^i)|}\,\sum \,\frac{(-1)^{p(f)}}{|\Aut(f,V)|}
\end{equation*}
where the sum is over all  $f$ in (\ref{RelativeModuli}) and
$p(f)$ is the  parity (\ref{parity}).
\end{itemize}
\end{lemma}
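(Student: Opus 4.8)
The plan is to relate the relative moduli space $\M^V_{\chi,m^1,\dots,m^k}(D,d)$ to the set of ramified covers counted in (\ref{spinHurwitz}) via the forgetful map that drops the contact marked points, and then to run an orbit--stabilizer count over the fibres of this map.

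For (a): if $m^i=(1^d)$ then $f$ is unramified over $q^i$, so $f^{-1}(q^i)$ consists of exactly $d$ distinct points, which are precisely the contact marked points $x^i_1,\dots,x^i_d$; hence any $\sigma\in\Aut(f,V)$ fixes all $d$ of them. I would then show $\sigma=\mathrm{id}$. Restrict $\sigma$ to $C\ssetminus f^{-1}(B)$, where $B\subset D$ is the branch locus of $f$; over $D\ssetminus B$ the map $f$ is an unramified covering, and since $q^i\notin B$ the fixed points $x^i_j$ all lie in $C\ssetminus f^{-1}(B)$. Each connected component $U$ of $C\ssetminus f^{-1}(B)$ is obtained from a component of $C$ by deleting finitely many points, so $f|_U:U\to D\ssetminus B$ is a connected covering, hence surjective; thus $U$ contains a fixed point of $\sigma$, which forces $\sigma(U)=U$, and then $\sigma|_U$ is a deck transformation of a \emph{connected} covering space fixing a point, hence the identity. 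Since $C\ssetminus f^{-1}(B)$ is dense, $\sigma=\mathrm{id}$, proving (a).

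For (b): the first step is to identify the target of the forgetful map. Because the dimension (\ref{DimRelModuli}) is zero, a $V$-regular map in (\ref{RelativeModuli}) has Euler characteristic $\chi$ equal to the left side of (\ref{RHformula}); comparing with Riemann--Hurwitz applied to $f$ shows $f$ has no ramification away from $V$ and no ramification over $q^i$ beyond that recorded by $m^i$. Hence forgetting the $x^i_j$ yields exactly a cover contributing to (\ref{spinHurwitz}), and conversely every such cover admits at least one valid labelling of its fibres over the $q^i$. Fix now a representative cover $f:C\to D$, put $G=\Aut(f)$, and let $L$ be the set of labellings of $f^{-1}(V)$ making $f$ into a $V$-regular map with contact vectors $m^1,\dots,m^k$. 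Labelling, over each $q^i$, the $\ell(m^i)$ preimages by the slots $1,\dots,\ell(m^i)$ compatibly with the ramification indices shows $|L|=\prod_{i=1}^k|\Aut(m^i)|$; in fact $L$ is a torsor under $\prod_i\Aut(m^i)$. The group $G$ acts on $L$ by permuting each fibre $f^{-1}(q^i)$ (preserving ramification indices), the equivalence classes of $V$-regular maps forgetting to $[f]$ are exactly the $G$-orbits on $L$, and for $\lambda\in L$ the automorphism group $\Aut(g,V)$ of the corresponding $V$-regular map $g$ is the $G$-stabilizer of $\lambda$. By orbit--stabilizer, summing over the $V$-regular maps $g$ lying over $[f]$,
\[
\sum_{g}\frac{1}{|\Aut(g,V)|}\ =\ \frac{|L|}{|\Aut(f)|}\ =\ \frac{\prod_{i=1}^k|\Aut(m^i)|}{|\Aut(f)|}.
\]
Finally, the twisting divisor in (\ref{TwistedTheta}) equals $\sum_{x\in f^{-1}(V)}\tfrac12(e_x-1)x$ with $e_x$ the ramification index of $f$ at $x$, so $L_f$ and the parity $p(f)$ of (\ref{parity}) depend only on the cover $f$, not on the labelling; multiplying the displayed identity by $(-1)^{p(f)}$ and summing over all covers contributing to (\ref{spinHurwitz}) gives $\sum_{g\in\M}(-1)^{p(g)}/|\Aut(g,V)|=\big(\prod_i|\Aut(m^i)|\big)H^{h,p}_{m^1,\dots,m^k}$, which rearranges to (b). The combinatorics is routine; the steps needing the most care are the Riemann--Hurwitz/Euler-characteristic comparison that makes the forgetful map land exactly on the covers of (\ref{spinHurwitz}), and the bookkeeping tying $\Aut(m^i)$, $\Aut(f,V)$ and $\Aut(f)$ together through orbit--stabilizer, where a spurious factor is easy to introduce. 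The hypothesis that the $m^i$ are odd enters only to make $L_f$ a genuine theta characteristic so that $p(f)$ is defined; the counting is otherwise insensitive to it.
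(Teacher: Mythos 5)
Your proof is correct and follows essentially the same route as the paper: the same forgetful-map relation between $V$-regular maps and ramified covers, and the same count of labellings by $\prod_i|\Aut(m^i)|$. The only cosmetic difference is in (b): the paper phrases the fibre count via the free action of $\Aut(f)/\Aut(f,V)$ on labellings (using that $\Aut(f,V)$ is normal in $\Aut(f)$, being the kernel of the action on $f^{-1}(V)$), while you run the equivalent orbit--stabilizer sum $\sum_g 1/|\Aut(g,V)|=|L|/|\Aut(f)|$ directly, which sidesteps the normality remark.
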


\begin{proof}
Let $(f,C;\{x^i_j\})$ be a $V$-regular map in (\ref{RelativeModuli}) and $\sigma\in \Aut(f,V)$.
If $m^i=(1^d)$, then the set of branch points $B$ of $f$ is a subset of $V\setminus \{q^i\}$ and
the restriction of $\sigma$ to $C\setminus f^{-1}(B)$ is a covering transformation that fixes contact marked points
$x^i_1,\cdots,x^i_d$. Noting $f^{-1}(B)$ is finite, we conclude that $\sigma$ is an identity map on $C$. This proves (a).

As mentioned above, forgetting contact marked points $x^i_j$ gives a (ramified) cover $f$  satisfying (\ref{RHformula}).
Conversely, given a (ramified) cover $f$ satisfying (\ref{RHformula}),
one can mark a point over $q^i$ with ramification index $m^i_j$ as a contact marked point $x^i_j$.
Such marking gives $V$-regular maps $(f,C;\{x^i_j\})$ in $\prod_{i=1}^k |\Aut(m^i)|$ ways.
Observe that
$(f,C;\{x^i_j\})$ and $(f,C;\{\sigma(x^i_j)\})$ are isomorphic for each $\sigma\in\Aut(f)$ and that
$\Aut(f,V)$ is a normal subgroup of $\Aut(f)$.
Consequently,  the quotient group $G=\Aut(f)/\Aut(f,V)$ acts freely on the set of   $V$-regular maps $(f,C;\{x^i_j\})$ obtained by the (ramified) cover $f$.
Its orbits give $\prod_{i=1}^k |\Aut(m^i)|/|G|$ points (i.e. equivalence classes of $V$-regular maps) in the  space (\ref{RelativeModuli}), each of which has the same automorphism group $\Aut(f,V)$.
Now, (b) follows from counting maps with the  parity of map modulo automorphisms.
\end{proof}

\medskip

\setcounter{equation}{0}
\section{Maps into a nodal curve }
\label{section2}
\bigskip

Let $D_0=D_1\cup E \cup D_2$ be a connected nodal curve of (arithmetic) genus $h$
with two nodes $p^1$ and $p^2$ such that for $i=1,2$,
$E=\P^1$ meets $D_i$ at node $p^i$ and  $D_i$  has genus $h_i$ with $h_1+h_2=h$.
In this section, we consider maps into  $D_0$ that are relevant to our subsequent discussion.

\medskip
In the below, we fix $d$, $h$, $\chi$ and odd partitions $m^1,\cdots,m^k$ of $d$ so that the Riemann-Hurwitz formula (\ref{RHformula}) holds, or equivalently, the dimension formula (\ref{DimRelModuli}) is  zero.
For each partition $m$  of $d$,  consider the product space
\begin{equation*}\label{central-moduli}
\CP_m = \M^{V_1}_{\chi_1, (1^d),m^1,\cdots,m^{k_1},m}(D_1,d) \times  \M^{V_0}_{\chi_0, m,(1^d),m}(E,d) \times  \M^{V_2}_{\chi_2, m,m^{k_1+1},\cdots,m^k,(1^d)}(D_2,d)
\end{equation*}
where $V_1=\{q^{k+1},q^1,\cdots,q^{k_1},p^1\}$, $V_0=\{p^1,q^{k+2},p^2\}$, $V_2=\{p^2,q^{k_1+1},\cdots,q^k,q^{k+3}\}$ and
\begin{equation}\label{SumEulerC}
\chi_1+\chi_0+\chi_2-4\ell(m)\ =\ \chi.
\end{equation}
For simplicity, let $\M^1_m,\M_m^0$ and $\M_m^2$ denote the first, the second and the third factors of $\CP_m$.

\begin{lemma}\label{L:CModuli}
If $\CP_m\ne \emptyset$, then the spaces $\M^1_m,\M^0_m$ and $\M_m^2$ have dimension zero. Consequently,  $\chi_0=2\ell(m)$ and
$d-\ell(m)$ is even.
\end{lemma}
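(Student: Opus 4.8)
The plan is to compute the dimensions of the three factors of $\CP_m$ using the dimension formula (\ref{DimRelModuli}) applied to each of $D_1$, $E$, and $D_2$, and then combine these with (\ref{SumEulerC}) to pin down everything. Concretely, for a $V$-regular map into a curve of genus $g$ with contact vectors $\mu^1,\dots,\mu^r$ and Euler characteristic $\chi'$, the formal dimension is $2d(1-g)-\chi'-\sum_s(d-\ell(\mu^s))$; I would write this out for each of $\M^1_m$, $\M^0_m$, $\M^2_m$. For $\M^0_m$ the base is $E=\P^1$ with $g=0$ and contact vectors $m,(1^d),m$, so its dimension is $2d-\chi_0-2(d-\ell(m))=2\ell(m)-\chi_0$. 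For $\M^1_m$ the base $D_1$ has genus $h_1$ and contact vectors $(1^d),m^1,\dots,m^{k_1},m$, so its dimension is $2d(1-h_1)-\chi_1-(d-\ell(m))-\sum_{i=1}^{k_1}(d-\ell(m^i))$, and similarly for $\M^2_m$ with genus $h_2$, partitions $m,m^{k_1+1},\dots,m^k,(1^d)$.

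Next I would add the three dimensions. Using $\chi_1+\chi_0+\chi_2 = \chi+4\ell(m)$ from (\ref{SumEulerC}), $h_1+h_2=h$, and the fact that the trivial partitions contribute $d-\ell(1^d)=0$, the sum of the three dimensions collapses to
\[
2d(1-h)-\chi-\sum_{i=1}^k(d-\ell(m^i)),
\]
which is exactly the dimension (\ref{DimRelModuli}) of the relative moduli space $\M^V_{\chi,m^1,\dots,m^k}(D,d)$, and this is zero by our standing hypothesis (the Riemann–Hurwitz formula (\ref{RHformula}) holds). So the three dimensions sum to zero.

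The remaining, and really the only substantive, point is to argue that each individual dimension is nonnegative, so that all three must vanish. The standard fact here is that a nonempty relative GW moduli space of the type considered (with the contact/ramification conditions imposed) has nonnegative formal dimension — equivalently, the expected dimension is a lower bound realized only when the map is as constrained as possible; one sees this directly since the formal dimension of a space of $V$-regular maps equals the index of the relevant linearized operator, and a $V$-regular map with an actual domain cannot have negative-dimensional deformation space once the imposed incidence conditions are accounted for. Granting $\CP_m\ne\emptyset$, each factor is nonempty, hence each has dimension $\ge 0$; since they sum to $0$, each is $0$. I expect this nonnegativity step to be the main obstacle to write cleanly, since it is the one place where one must invoke the structure of the relative moduli spaces rather than just arithmetic. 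Finally, setting the dimension of $\M^0_m$ equal to zero gives $2\ell(m)-\chi_0=0$, i.e. $\chi_0=2\ell(m)$; and since $\chi_0=\chi(C_0)$ for the domain $C_0$ of a degree-$d$ cover of $\P^1$ branched over (at most) the three points of $V_0$ with the branching over two of them governed by $m$, the parity of $\chi_0$ forces $d-\ell(m)$ to be even — equivalently this follows because $\chi_0=2\ell(m)$ together with the Riemann–Hurwitz count $\chi_0 = 2d - 2(d-\ell(m))$ for the middle component is automatically even, while consistency of $\chi_1,\chi_2$ with (\ref{SumEulerC}) and the parities forced by Riemann–Hurwitz on $D_1,D_2$ shows $d-\ell(m)$ must be even.
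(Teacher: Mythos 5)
Your computation of the three formal dimensions and the observation that they sum to the global formal dimension (\ref{DimRelModuli}), which vanishes by hypothesis, is exactly the paper's argument. For the nonnegativity step that you worry about: you do not need the index-theoretic detour. It is pure Riemann--Hurwitz, which is all the paper invokes: for any actual $V$-regular map the formal dimension equals the degree of the ramification divisor supported away from $V$, hence is $\ge 0$ as soon as the space is nonempty.

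The genuine soft spot is the final assertion that $d-\ell(m)$ is even, and your argument for it does not go through as written. The first version is vacuous: $\chi_0 = 2\ell(m) = 2d - 2(d-\ell(m))$ is automatically even for \emph{any} partition $m$, so the parity of $\chi_0$ carries no information about the parity of $d-\ell(m)$. (The middle curve is a disjoint union of rational components, and its Euler characteristic is always even.) The second version gestures at the right computation but omits the one fact that actually makes it work: the partitions $m^1,\dots,m^k$ are assumed \emph{odd}, so $d-\ell(m^i) = \sum_j (m^i_j - 1)$ is even for each $i$. Setting $\dim\M^1_m = 0$ gives
\[
\chi_1 \;=\; 2d(1-h_1) \;-\; (d-\ell(m)) \;-\; \sum_{i=1}^{k_1}\bigl(d-\ell(m^i)\bigr),
\]
and since $\chi_1$ and $2d(1-h_1)$ are even and each $d-\ell(m^i)$ is even by the oddness of $m^i$, one concludes $d-\ell(m)$ is even. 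Without invoking oddness of the $m^i$, the parity of $d-\ell(m)$ is simply not determined, so this hypothesis must appear explicitly.
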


\begin{proof}
Each $\M^i_m$ ($0\leq i\leq 2$) has nonnegative dimension by the Riemann-Hurwitz formula.
The formula (\ref{SumEulerC}) and
our assumption that the dimension (\ref{DimRelModuli}) is zero thus imply that
each $\M^i_m$ has dimension zero. The dimension formulas  for $\M^0_m$ and $\M^i_m$ ($i=1,2$) then show that
$\chi_0=2\ell(m)$ and $d-\ell(m)$ is even because $d-\ell(m^i)=\sum (m^i_j-1)$ is even for all $1\leq i\leq k$.
\end{proof}

\medskip

Let $|A|$ denote the cardinality of a set $A$.

\begin{lemma}\label{NumM0}\ \
${\displaystyle |\M_m^0| \ =\   \frac{d!\,|\Aut(m)|}{\prod m_j}}$.
\end{lemma}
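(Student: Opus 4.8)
The plan is to identify $\M^0_m$ with an explicit finite set and count it. First observe that $E\cong\P^1$ and, since the middle partition in $\M^0_m=\M^{V_0}_{\chi_0,m,(1^d),m}(E,d)$ is $(1^d)$, every $f\in\M^0_m$ is unramified over $E\setminus\{p^1,p^2\}$; together with $\chi_0=2\ell(m)$ from Lemma~\ref{L:CModuli} and $\deg f=d$, this forces the domain of $f$ to be a disjoint union of $\ell:=\ell(m)$ copies of $\P^1$, on each of which $f$ is a cyclic cover totally ramified over $p^1$ and $p^2$, the degrees forming the partition $m$. Also, since the partition over $q^{k+2}$ is $(1^d)$, Lemma~\ref{SHbyRelModuli}(a) gives $\Aut(f,V_0)$ trivial for every such $f$, so $|\M^0_m|$ is a genuine count of isomorphism classes, unweighted.

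Next I would rigidify. Fix coordinates on $E=\P^1$ with $p^1=0$, $p^2=\infty$, $q^{k+2}=1$, and for each $j=1,\dots,\ell$ let $C_j=\P^1$ with the standard map $z\mapsto z^{m_j}$; its fibres over $p^1,p^2,q^{k+2}$ are $\{0\}$, $\{\infty\}$, and the set $R_j$ of $m_j$-th roots of unity. Every point of $\M^0_m$ is represented by a map with domain $\hat C=\bigsqcup_{j=1}^\ell C_j$ and the standard map on each factor: one matches the contact point lying over $p^1$ with multiplicity $m_j$ to $C_j$ (possible since its component has degree $m_j$, and a cyclic cover of $\P^1$ of given degree branched over two points is unique). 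Two such representatives differ by an automorphism of $\hat C$ over $E$; the automorphisms fixing all the $p^1$-contact points must fix each component $C_j$, hence form the rotation group $G=\prod_{j=1}^\ell(\mathbb Z/m_j)$, of order $\prod_j m_j$. Thus $\M^0_m$ is the set of pairs (labelling of the $p^2$-contact points, labelling of the $q^{k+2}$-contact points) on $\hat C$, modulo $G$.

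It then remains to count. A labelling of the $\ell$ points over $p^2$ is a bijection from $\{1,\dots,\ell\}$ compatible with the multiplicity constraints $m_j$, so there are $|\Aut(m)|$ of them, and $G$ acts trivially on these (rotations fix $\infty$ in each component). A labelling of the $d$ points over $q^{k+2}$ is a bijection $\{1,\dots,d\}\to\bigsqcup_j R_j$, of which there are $d!$, and $G$ acts on $\bigsqcup_j R_j$ through the simply transitive action of $\mathbb Z/m_j$ on $R_j$, so $G\hookrightarrow\mathrm{Sym}(\bigsqcup_j R_j)$ and $G$ acts freely by post-composition on this set of $d!$ labellings, giving $d!/\prod_j m_j$ orbits. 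Combining (the action on the pair is free because it is free on the second coordinate), $|\M^0_m|=|\Aut(m)|\cdot d!/\prod_j m_j$, which is the claim.

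The only genuine difficulty is the bookkeeping of the $\Aut(m)$ factors, and that is the step to be careful with: rigidifying the curve via the $p^1$-marking already absorbs one copy of $\Aut(m)$ (the permutations of equal-degree components), leaving exactly one surviving copy from the $p^2$-marking, and one must check that $G$ acts trivially on the $p^2$-labellings but freely on the $q^{k+2}$-labellings. As a consistency check, the same number arises from the monodromy description: $\M^0_m$ is in bijection with triples $(\sigma,\rho,\rho')$, where $\sigma\in S_d$ has cycle type $m$ and $\rho,\rho'$ order its cycles compatibly with $m$, and $\frac{d!}{|\Aut(m)|\prod_j m_j}\cdot|\Aut(m)|^2=\frac{d!\,|\Aut(m)|}{\prod_j m_j}$.
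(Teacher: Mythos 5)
Your proof is correct, and it reaches the same number by a somewhat more explicit route than the paper. The paper's proof makes the same initial structural observation (the domain is $\ell$ rational curves $E_j$, each $f|_{E_j}$ a degree-$m_j$ cyclic cover totally ramified over $p^1$ and $p^2$), then records that the underlying unmarked cover is unique with $|\Aut(f)| = |\Aut(m)|\prod m_j$, and finishes by simply invoking the orbit-counting argument already written out in the proof of Lemma~\ref{SHbyRelModuli}: marking the fibers gives $\prod_i |\Aut(m^i)| = |\Aut(m)|^2\, d!$ labelled maps, and $\Aut(f)/\Aut(f,V_0)\cong\Aut(f)$ (trivial $\Aut(f,V_0)$, by part (a)) acts freely on them, giving $|\Aut(m)|^2 d!/(|\Aut(m)|\prod m_j)$ points. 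You instead rigidify: fix the $p^1$-labelling by identifying the domain with a standard $\hat C = \bigsqcup C_j$, reduce the remaining automorphism group to the rotation torus $G=\prod\mathbb Z/m_j$, and count $p^2$- and $q^{k+2}$-labellings modulo $G$ directly. The two arguments are the same group-theoretic count unpacked differently; yours absorbs one of the two $|\Aut(m)|$ factors into the rigidification step and makes explicit exactly why the residual $G$ acts trivially on $p^2$-labellings but freely on $q^{k+2}$-labellings, while the paper's version stays at the level of $\Aut(f)$ acting on the set of all labellings. Your version is more self-contained and also furnishes the monodromy consistency check; the paper's is shorter because it reuses the already-proved lemma. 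No gaps.
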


\begin{proof}
Let $f\in\M^0_m$. Since $\chi_0=2\ell(m)$, we have
\begin{itemize}
\item
the domain of $f$   is a disjoint union of smooth rational curves $E_j$ for $1\leq j\leq \ell(m)$,
\item
each restriction $f_j=f|_{E_j}$ has exactly one contact marked point over $p^i$ ($i=1,2$) with multiplicity $m_j$,
so $f_{j}$ has degree $m_j$.
\end{itemize}
Consequently, forgetting contact marked points of maps in $\M_m^0$ gives exactly one map (as a cover) with
automorphism group of order $|\Aut(m)|\prod m_j$. Here the factor $|\Aut(m)|$ appears because
we can relabel  maps $f_j$ in $|\Aut(m)|$ ways and the factor $\prod m_j$ appears because each restriction map $f_j$ (as a cover)
has an automorphism group of order $m_j$.
The argument in the proof of Lemma~\ref{SHbyRelModuli} then shows the lemma.
\end{proof}

For each $(f_1,f_0,f_2) \in \CP_m$, by identifying contact marked points over $p^i\in D^i\cap E$ ($i=1,2$), 
one can glue the domains of $f_i$ and $f_0$ to obtain a map $f:C\to D_0$ with $\chi(C)=\chi$.
For notational convenience, we will often write the glued map $f$ as $f=(f_1,f_0,f_2)$.
Denote by
\begin{equation}\label{CenterModuli}
\M_{m,0}
\end{equation}
the space of such glued maps $f=(f_1,f_0,f_2)$.
Contact marked points are labeled, but nodal points of $C$ are not labeled.
Thus, we have:

\begin{lemma}\label{GluingMap}
$\CP_m$ is a degree $|\Aut(m)|^2$ cover of $\M_{m,0}$.
\end{lemma}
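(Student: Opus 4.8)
The plan is to analyze the gluing construction $\CP_m \to \M_{m,0}$ fiber by fiber and show that each fiber has exactly $|\Aut(m)|^2$ elements. First I would make precise the gluing map: given $(f_1,f_0,f_2)\in\CP_m$, the maps $f_1$ and $f_0$ each carry $\ell(m)$ contact marked points over $p^1$, labeled $1,\dots,\ell(m)$, with matching ramification profiles $m$ (and similarly $f_0,f_2$ over $p^2$); we glue the $j$-th contact point of $f_1$ to the $j$-th contact point of $f_0$ over $p^1$, and likewise over $p^2$, then forget the nodal points. Since the nodes of the glued domain $C$ are unlabeled, any relabeling of the $\ell(m)$ contact points over $p^1$ that preserves the partition $m$—i.e. any element of $\Aut(m)\subset S_{\ell(m)}$—applied simultaneously to $f_1$'s and $f_0$'s contact points produces a different point of $\CP_m$ with the same glued image in $\M_{m,0}$; similarly over $p^2$. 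This gives a free action of $\Aut(m)\times\Aut(m)$ on $\CP_m$ whose orbits map to single points of $\M_{m,0}$.

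Next I would verify the action is genuinely free and that the orbits are exactly the fibers. Freeness over $p^1$: if a nontrivial permutation of the contact points of $f_1$ produced the same $V_1$-regular map (as an element of $\M^1_m$, where contact points are labeled), that would contradict the labeling—the only way $(f_1,\dots,x^{p^1}_1,\dots)$ equals $(f_1,\dots,x^{p^1}_{\pi(1)},\dots)$ as labeled objects is $\pi=\mathrm{id}$. So the $\Aut(m)^2$-action has trivial stabilizers. For surjectivity of orbits onto fibers: two tuples $(f_1,f_0,f_2)$ and $(f_1',f_0',f_2')$ glue to the same map $f:C\to D_0$ exactly when there is an isomorphism of glued data carrying one to the other; restricting such an isomorphism to each irreducible piece $D_i$, $E$ of $D_0$ shows $f_i\cong f_i'$ and $f_0\cong f_0'$ as \emph{maps}, and the only ambiguity is how the isomorphism matches up the (unlabeled) nodal points of $C$ with those of $C'$, which is precisely a pair of permutations in $\Aut(m)$ over $p^1$ and $p^2$. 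Hence each fiber of $\CP_m\to\M_{m,0}$ is a single $\Aut(m)^2$-orbit, of cardinality $|\Aut(m)|^2$.

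The main obstacle I expect is bookkeeping the labeling conventions carefully—in particular, confirming that the contact marked points over the \emph{nodes} $p^1,p^2$ are the only ones being permuted (the contact points over the other fixed points $q^i$, $q^{k+1},q^{k+2},q^{k+3}$ remain labeled and untouched), and ruling out the possibility that an automorphism of the glued map $f$ could mix the contact data in a way not coming from a pair of $\Aut(m)$-permutations. This is handled by noting that any automorphism of $f=(f_1,f_0,f_2)$ preserves each irreducible component of the target $D_0=D_1\cup E\cup D_2$ (the components have distinct images, and $E=\P^1$ is distinguished), hence restricts to automorphisms of each $f_i$; since those restricted maps are $V_i$-regular with labeled contact points over the $q$'s, the automorphism fixes all the $q$-contact points, and its only freedom is the permutation it induces on the (unlabeled) preimages of $p^1$ and $p^2$, which must preserve the multiplicity profile $m$. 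This matches exactly the $\Aut(m)^2$-action, completing the count.
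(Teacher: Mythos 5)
Your strategy mirrors the paper's (which in fact offers almost no proof beyond noting that contact marked points are labeled while nodal points are not), and your orbit--stabilizer framing together with the surjectivity-onto-fibers analysis is sound in outline. The problem is the freeness step. You argue that $(f_1,\dots,x^{p^1}_1,\dots)$ and $(f_1,\dots,x^{p^1}_{\pi(1)},\dots)$ can only ``equal as labeled objects'' if $\pi=\mathrm{id}$, and conclude the stabilizer is trivial. But points of $\M^1_m$ are \emph{equivalence classes} of labeled maps up to domain reparametrization: the two tuples define the same point of $\M^1_m$ whenever there is a $\sigma\in\Aut(C_1)$ with $f_1\circ\sigma=f_1$ fixing every $q$-contact point while permuting the $p^1$-contact points by $\pi$. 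Your argument does not address whether such $\sigma$ can exist, and in general it can. For instance, if the trivial insertions $(1^d)$ were stripped from the factors of $\CP_m$, then for $d=2$, $m=(1,1)$, a connected unramified double cover $f_1:C_1\to D_1$ has a deck transformation interchanging the two nodes over $p^1$, so the two labelings represent a single point of $\M^1_m$, the action is not free, and the count $|\Aut(m)|^2$ fails.

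The ingredient you must invoke is the extra contact vector $(1^d)$ over $q^{k+1}$ (in $\M^1_m$) and over $q^{k+3}$ (in $\M^2_m$), which the paper builds into $\CP_m$ for exactly this reason. By the argument in the proof of Lemma~\ref{SHbyRelModuli}(a), any $\sigma$ commuting with $f_1$ and fixing the $d$ unramified contact points over $q^{k+1}$ restricts to a covering transformation over $D_1\setminus B$ that fixes a point in each connected component of the domain, hence is the identity; so no nontrivial $\pi$ can be absorbed, and the $\Aut(m)\times\Aut(m)$ action is free. The same rigidity is quietly used in your surjectivity paragraph, where you claim the isomorphism of glued data is pinned down except for its effect on the nodes. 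Once this is stated explicitly, your proof is complete and agrees with the paper's (largely implicit) argument.
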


\vskip 1cm

\setcounter{equation}{0}
\section{Limiting and gluing }
\label{section3}
\bigskip

Following \cite{IP2}, this section  describes limiting and gluing arguments under a degeneration of target curves.
Let $D_0=D_1\cup E\cup D_2$ be the nodal curve with fixed points $q^1,\cdots,q^{k+3}$ as in Section~\ref{section2}.
In Section~\ref{section4}, we will construct a family of curves together with $k+3$ sections:
\begin{equation}\label{deg-target}
\xymatrix{
 \D \ar[d]^{\rho} \\
 \Delta \ar@/^1pc/[u]^{Q^i}    }
\end{equation}
Here the total space $\D$ is a smooth complex surface, $\Delta\subset \cx$ is a disk with parameter $\la$, the central fiber is $D_0$, the general fiber $D_\la$ ($\la\ne 0$) is a smooth curve of genus $h$ and $Q^i(0)=q^i$ for  $1\leq i\leq k+3$.
By Gromov Convergence Theorem, a sequence of holomorphic maps into $D_\la$ with $\la\to 0$
has a map into $D_0$ as a limit.
For notational simplicity, for $\la\ne 0$ we set
\begin{equation}\label{def-Mr}
\M_\la\ =\ \M^{V_\la}_{\chi,m^1,\cdots,m^{k+3}}(D_\la,d)\ \ \ \ \
\mbox{where}\ \ \ \ \ V_\la\ =\ \{Q^1(\la),\cdots,Q^{k+3}(\la)\},
\end{equation}
and denote the set of limits of sequences of maps in $\M_\la$ as $\la\to 0$ by
\begin{equation}\label{LimitSet}
\lim_{\la\to 0} \,\M_\la. 
\end{equation}
Lemma~\ref{LimitModuli} below  shows that limit maps in (\ref{LimitSet}) lie
in the union of spaces (\ref{CenterModuli}), namely
\begin{equation}\label{E:LimitModuli}
\lim_{\la\to 0} \,\M_\la \ \subset \ \underset{m}{\tcup}\ \M_{m,0}
\end{equation}
where the union is over all partitions $m$ of $d$ with $d-\ell(m)$  even.

Conversely, by the Gluing Theorem of \cite{IP2}, the domain of each map in $\M_{m,0}$  can be smoothed to produce
maps in $\M_\la$ for small $|\la|$.
Shrinking $\Delta$ if necessary, for $\la\in \Delta$,   one can assign
to each $f_\la\in \M_\la$ a partition $m$ of $d$ by (\ref{E:LimitModuli}).
Let $\M_{m,\la}$  be the set of all  pairs $(f_\la,m)$. For each $f\in\M_{m,0}$,  let
\begin{equation}\label{ComFModuli}
\Z_{m,f}\ \to\ \Delta
\end{equation}
be the connected component of  $\bigcup_{\la\in\Delta} \M_{m,\la}\to \Delta$ that contains $f$ and let
\begin{equation}\label{main-moduli}
\Z_{m,f, \la}
\end{equation}
denote the fiber of (\ref{ComFModuli}) over $\la\in \Delta$.
It follows that for $\la\ne 0$
\begin{equation}\label{DecMr}
\M_\la\ =\ \underset{f\in \M_{m,0}}{\tsqcup} \Z_{m,f,\la}.
\end{equation}
For $f=(f_1,f_0,f_2)\in\M_{m,0}$ where $m=(m_1,\cdots,m_\ell)$,
let $y^i_j$ be the node mapped to $p^i$ at which
$f_i$ and $f_0$ have multiplicity $m_j$.
The Gluing Theorem shows that
one can smooth each node $y^i_j$,
in $m_j$ ways, to produce $(\prod m_j)^2$ maps in $\CZ_{m,f, \la}$, so we have
\begin{equation}\label{number-fiber}
|\CZ_{m,f, \la}|\ =\ \big(\,\tprod m_j\,\big)^2\ \ \ \ \ (\la\ne 0).
\end{equation}

\bigskip

In order to prove (\ref{E:LimitModuli}), we will use the following fact on stable maps.
An irreducible component of a stable holomorphic map $f$ is a ghost component if its image is a point.
Write the domain of $f$ as $C^g\cup C$ where $C^g$ is a connected curve whose irreducible components are all ghost components. Then the stability of $f$ implies that
\begin{equation}\label{stability}
\chi(C^g) -\ell^g - n \ \leq\ -1
\end{equation}
where $\ell^g=|C^g\cap C|$ and $n$ is the number of marked points on $C^g$.

\begin{lemma}\label{LimitModuli}
Let $\M_r$ and $\M_{m,0}$ be as above. Then we have
$$
\lim_{\la\to 0} \,\M_\la \ \subset \ \underset{m}{\tcup}\ \M_{m,0}
$$
where the union is over all partitions $m$ of $d$ with $d-\ell(m)$  even.
\end{lemma}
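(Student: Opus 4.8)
The plan is to take a sequence $f_\la \in \M_\la$ with $\la \to 0$ and analyze its Gromov limit $f_0$, showing that after stabilization it is a map into $D_0 = D_1 \cup E \cup D_2$ of the form described in Section~\ref{section2}, hence an element of $\M_{m,0}$ for some partition $m$ of $d$ with $d-\ell(m)$ even. First I would invoke the Gromov Convergence Theorem to get a stable limit $f_0$ whose domain is $C^g \cup C$ with $C^g$ the (possibly empty) union of ghost components contracted to points; since each $f_\la$ has degree $d$ and is $V_\la$-regular with contact data $m^1,\dots,m^{k+3}$ over the sections $Q^i$, the limit $f_0$ has total degree $d$, maps nodes of $C$ to nodes of $D_0$ or into the smooth locus, and carries the contact marked points $x^i_j$ with $f_0(x^i_j) = q^i$ and multiplicity $m^i_j$. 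The key structural point is that no component of $C$ can map with positive degree to a node of $D_0$ and that the only components of $C$ mapping to $E = \P^1$ are the ones forced by the contact conditions at the nodes $p^1, p^2$.

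The main technical step — and the one I expect to be the principal obstacle — is ruling out ghost components and ruling out "extra" rational bubbles in $E$, i.e., showing the limit lies in $\M_{m,0}$ on the nose rather than in some larger Kontsevich-type compactification. Here I would argue by a dimension count: decompose the limit into its restrictions $f_1, f_0|_E, f_2$ to the preimages of $D_1, E, D_2$, read off the induced contact data at $p^1$ and $p^2$ (which must match on the two sides of each node, giving a common partition $m$ of $d$), and apply the Riemann–Hurwitz/dimension formula (\ref{DimRelModuli}) to each piece. Each piece has nonnegative expected dimension, the Euler characteristics add up via a relation of the shape (\ref{SumEulerC}) together with the contribution (\ref{stability}) of any ghost curve $C^g$, and the total must equal the dimension of $\M_\la$, which is zero. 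The inequality (\ref{stability}), $\chi(C^g) - \ell^g - n \le -1$ on each connected ghost curve, forces every such contribution to be strictly negative unless $C^g = \emptyset$; combined with the nonnegativity of the dimensions of the non-ghost pieces and the vanishing of the total, this squeezes out all ghost components and forces each non-ghost piece to have dimension exactly zero. By Lemma~\ref{L:CModuli} this in turn forces $\chi_0 = 2\ell(m)$ and $d - \ell(m)$ even, and forces the $E$-part to be exactly $\ell(m)$ disjoint multiple covers of $\P^1$ totally ramified over $p^1$ and $p^2$, with no free bubbles.

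Finally I would assemble the three pieces: the $D_1$-part is a $V_1$-regular map in $\M^1_m$, the $E$-part is in $\M^0_m$, the $D_2$-part is in $\M^2_m$, and gluing them along the matched contact points over $p^1$ and $p^2$ (with the nodes of the glued domain unlabeled) produces precisely an element of $\M_{m,0}$ as defined in (\ref{CenterModuli}). Since the only freedom in $m$ is which partition of $d$ arises as the common contact data over the nodes, and we have shown $d - \ell(m)$ is even, the limit lies in $\tcup_m \M_{m,0}$ with the union over such $m$, which is the claim. A routine point to check along the way is that the contact marked points $x^i_j$ over $q^i$ do not collide with the nodes $p^1, p^2$ in the limit — this follows because the $Q^i$ are sections disjoint from the node-sections of $\D \to \Delta$ — so the limit genuinely carries the full contact data $m^1,\dots,m^{k+3}$ on the smooth loci of $D_1$ and $D_2$ as required.
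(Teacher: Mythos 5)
Your proposal follows essentially the same strategy as the paper: take the Gromov limit, decompose the domain into the parts over $D_1$, $E$, $D_2$ and the various ghost curves, combine the Riemann--Hurwitz inequalities with the stability bound $\chi(C^g)-\ell^g-n\le -1$ to force all ghost components to vanish, and then use the resulting equalities together with Lemma~\ref{L:CModuli} to pin down the structure of the $E$-part and the parity of $d-\ell(m)$. The one point the sketch treats as automatic but the paper proves is that the contact partitions over $p^1$ and $p^2$ coincide (giving a single $m$): this is extracted from the equality $\chi(C_0)=\ell(\tilde m^1)+\ell(\tilde m^2)$ together with $\chi(C_0)\le 2\min\{\ell(\tilde m^1),\ell(\tilde m^2)\}$, not merely from matching contact data across each node separately.
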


\begin{proof}
Let $f$ be a limit map in (\ref{LimitSet}). The domain $C$ of $f$ can be written as
\begin{equation}\label{decompo-domain}
C\ =\ C_1\ \tcup\ C_0\ \tcup\ C_2\ \ \tcup\ \big(\ \underset{i=1}{\overset{k+3}{\tcup}}\  C_i^g\ \big)\ \tcup \
C^g\ \tcup\ \tilde{C}^g
\end{equation}
where $C_0$ maps to $E$, $C_1$ and $C_2$ map to $D_1$ and $D_2$, $C_i^g$ is the union of all ghost components
over $q^i$ where $i=1,\cdots,k+3$, $C^g$ is the union of all ghost components
over points in  $D_0\setminus (V_1\cup V_0\cup V_2)$ and $\tilde{C}^g$ is the union of all ghost components  over $\{p^1,p^2\}$.
Let $f_j=f|_{C_j}$ for $j=0,1,2$.
Observe that $f_j$ is $V_j$-regular because
$C_j$ has no ghost components.
Let $\hat{m}^i$ be a contact vector over $q^i$,
$\tilde{m}^1$ and $\tilde{m}^2$ be contact vectors of $f_1$ and $f_2$ over  $p^1$ and $p^2$,
and $\tilde{m}^{0;1}$ and  $\tilde{m}^{0;2}$ be contact vectors of $f_0$ over $p^1$ and $p^2$.
The Riemann-Hurwitz formulas for $f_0$, $f_1$ and  $f_2$ give
\begin{equation}\label{RH-C-hat}
\sum_{j=0}^2 \chi(C_j)\ \leq\ 2d(1-h)  + \sum_{i=1}^{k+3} \big(\,\ell(\hat{m}^i)-d\,\big)
+ \sum_{i=1}^2 \big(\,\ell(\tilde{m}^i)+  \ell(\tilde{m}^{0;i})\,\big).
\end{equation}

For $i=1,\cdots, k+3$,
let $\ell_i=|C_1\cup C_0 \cup C_2 \cap C_i^g|$ and let $n_i$ be the number of marked points on $C_i^g$.
Since  all  marked points are limits of  marked points, we have
\begin{equation}\label{number-marked-pts}
\ell(\hat{m}^i)\  = \ \ell(m^i) - n_i + \ell_i.\ \ \ \ \
\end{equation}
For $j=0,1,2$, let $\tilde{\ell}_j=|C_j\cap \tilde{C}^g|$.
Counting the number of nodes mapped to $p^1$ and $p^2$ shows
\begin{equation}\label{CountInt}
\sum_{i=1}^2\,\big(\,\ell(\tilde{m}^i)-\tilde{\ell}_i\,\big)\ =\ \sum_{i=1}^2\,|C_i\cap C_0| \ =\
\sum_{i=1}^2\,\ell(\tilde{m}^{0;i}) - \tilde{\ell}_0.
\end{equation}
Let $\ell^g=|C_1\cup C_0 \cup C_2 \cap C^g|$.
Since $\chi(C)=\chi$, by (\ref{decompo-domain}) and (\ref{CountInt}) we have
\begin{equation}\label{EuleC-domain}
\chi\ =\ \sum_{j=0}^2 \chi(C_j) + \sum_{i=1}^{k+3} \big(\,\chi(C_{i}^g) -2\ell_i\,\big)
+\chi(C^g) -2\ell^g + \chi(\tilde{C}^g)
- \tilde{\ell} - \sum_{i=1}^2 \big(\,\ell(\tilde{m}^i)+\ell(\tilde{m}^{0;i})\,\big)
\end{equation}
where $\tilde{\ell}=\tilde{\ell}_0+\tilde{\ell}_1+\tilde{\ell}_2$.
By our assumption that the formula (\ref{RHformula}) holds,
it follows from (\ref{RH-C-hat}), (\ref{number-marked-pts}) and (\ref{EuleC-domain}) that
\begin{equation}\label{EulerCha}
\chi\
\leq\
\chi + \sum_{i=1}^{k+3} \big(\,\chi(C_i^g)-   \ell_i-n_i\,\big)  + \chi(C^g)  -2\ell^g  +
\chi(\tilde{C}^g) -  \tilde{\ell}.
\end{equation}

Noting $C^g$ and $\tilde{C}^g$ have no marked points, by (\ref{stability}) and (\ref{EulerCha}) we conclude
that the domain $C$ of $f$ has no ghost components. Consequently,
\begin{itemize}
\item
$f_j$ is $V_j$-regular for $j=0,1,2$,
\item
$\tilde{m}^i=\tilde{m}^{0;i}$ for $i=1,2$ (cf. Lemma 3.3 of \cite{IP2}) and  $\hat{m}^i=m^i$ for $i=1,\cdots,k+3$.
\end{itemize}
In particular, the equality in (\ref{RH-C-hat}) holds; otherwise we have a strict inequality in (\ref{EulerCha}).
So, we have $\chi(C_0)=\ell(\tilde{m}^1)+\ell(\tilde{m}^2)$.
But  $\chi(C_0) \leq 2\min\{\ell(\tilde{m}^1),\ell(\tilde{m}^2)\}$.
It follows that
\begin{itemize}
\item
$C_0$ has $\ell(\tilde{m}^1)=\ell(\tilde{m}^2)$ connected components $E_j$ with $\chi(E_j)=2$ for all $j$,
\item
$\tilde{m}^1_j = \deg(f_0|_{E_j}) = \tilde{m}^2_j$ for all $j$, i.e., $\tilde{m}^1=\tilde{m}^2$.
\end{itemize}
It follows that the Euler characteristics of
$C_0$, $C_1$ and $C_2$ satisfy (\ref{SumEulerC}) by (\ref{EuleC-domain}).
Therefore,
$f\in\M_{m,0}$ for $m=\tilde{m}^1=\tilde{m}^2$ and $d-\ell(m)$ is even by Lemma~\ref{L:CModuli}.
\end{proof}

\vskip 1cm


\setcounter{equation}{0}
\section{Smooth model  by Schiffer variation}
\label{section4}
\bigskip
A {\em Schiffer Variation} of a nodal curve (cf. pg. 184 of \cite{ACG}) is
obtained by gluing deformations $uv=\la$ near nodes with the trivial deformation away from  nodes.
In this section, we use the method of Schiffer variation  to construct a smooth model for
the space $\Z_{m,f}$ in (\ref{ComFModuli}) which has several branches intersecting at $f$ unless $m$ is trivial.

\medskip
Throughout this section,  we fix an odd partition
$m=(n^\ell)$, i.e. $m=(m_1,\cdots,m_\ell)$  with
\begin{equation}\label{assumption-n}
m_1\ =\ \cdots\ =\ m_\ell\ =\ n\ \ \ \ \ \mbox{where}\ \  n=d/\ell\ \ \mbox{is\ odd}.
\end{equation}
Let $f=(f_1,f_0,f_2)$ be a map in $\M_{m,0}$ in (\ref{CenterModuli}).
As described in Section~2,
the central fiber of $\rho:\D\to \Delta$ is the nodal curve $D_0= D_1 \cup E \cup D_2$ with two nodes $p^1\in D_1\cap E$ and
$p^2\in D_2\cap E$ where $E=\P^1$.
The domain of $f$ is a nodal curve
$$
C  \ = \  C_1 \cup C_0  \cup  C_2
\ \ \ \ \ \mbox{where}\ \ \ \ \ C_0=\cup_{j=1}^\ell E_\ell
$$
with $2\ell$ nodes such that for $i=1,2$ and $j=1,\cdots,\ell$,
\begin{itemize}
\item
$f^{-1}(p^i)$   consists of the $\ell$ nodes $y^i_j\in C_i\cap E_j$,
\item
$C_i$ is smooth and $f|_{C_i}=f_i$ has ramification index $m_j=n$ at the node $y^i_j$,
\item
$E_j=\P^1$ and $f|_{E_j}=f_0|_{E_j}:E_j\to E$ has ramification index $m_j=n$ at the node $y^i_j$.
\end{itemize}

\medskip

The following is a main result of this section.

\begin{prop}\label{TwistingProp}
Let $f$ be as above.
Then, for each vector $\zeta=(\zeta_1^1,\zeta_1^2,\cdots,\zeta_\ell^1,\zeta_\ell^2)$ where $\zeta_j^i$
is a $n$-th root of unity,
there are a family of curves $\varphi_\zeta:\C_\zeta\to \Delta$, with smooth total space $\C_\zeta$,
over a disk $\Delta$ (with parameter $s$) and a holomorphic map $\F_\zeta:\C_\zeta\to \D$
satisfying:
\begin{itemize}
\item[(a)]
The central fiber $C_{\zeta,0}=C$ and the restriction map  $\F_\zeta|_{C} = f$.
\item[(b)]
The general fiber $C_{\zeta,s}$ ($s\ne 0$) is smooth and for $\la=s^n\ne 0$
\begin{equation}\label{SM-Fmoduli}
\underset{\zeta}{\tcup}\ \{\,f_{\zeta,s}\}\ =\ \CZ_{m,f,\la}
\end{equation}
where the union is over all $\zeta$, $f_{\zeta,s}=\F_\zeta|_{C_{\zeta,s}}$ and $\Z_{m,f,\la}$ is the space (\ref{main-moduli}).
\end{itemize}
\end{prop}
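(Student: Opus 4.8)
The plan is to build the family $\varphi_\zeta:\C_\zeta\to\Delta$ explicitly by the Schiffer variation recipe, one node at a time, with the exponent $n$ built into the gluing so that the map $\F_\zeta$ extends holomorphically. Concretely, near each node $y^i_j$ of $C$ choose local coordinates: a coordinate $w$ on $C_i$ with $y^i_j=\{w=0\}$ and a coordinate $t$ on $E_j$ with $y^i_j=\{t=0\}$, chosen so that the map $f$ reads $z=w^n$ on the $C_i$-side and $z=t^n$ on the $E_j$-side, where $z$ is a local coordinate on $D_i$ (resp. $E$) near $p^i$ in which the node of $D_0$ is $uv=\la$ with $u=z$ on the $D_i$-side and $v=z$ on the $E$-side. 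The smoothing of the node $p^i$ of $\D$ over $\la$ is $uv=\la$; I want a smoothing of $y^i_j$ over the parameter $s$ that maps to it, so I impose $wt=\zeta^i_j\, s$ (this is where the root of unity $\zeta^i_j$ enters) and check that $u=w^n$, $v=t^n$ indeed satisfy $uv=(wt)^n=(\zeta^i_j s)^n=s^n=\la$. Thus setting $\la=s^n$ the local model of $\F_\zeta$ is the product $(w,t,s)\mapsto(w^n,t^n,s^n)$ on $\{wt=\zeta^i_j s\}\to\{uv=\la\}$, which is manifestly holomorphic, and $\{wt=\zeta^i_j s\}$ is smooth. Away from all the nodes $y^i_j$ one takes the trivial (product) deformation $C_k\times\Delta$ (resp. $E_j\times\Delta$) with the trivial extension of $f_k$ (resp. $f_0|_{E_j}$), and glues. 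This produces a smooth total space $\C_\zeta$ with central fiber $C$ and smooth general fiber $C_{\zeta,s}$; part (a) is immediate from the construction since the $s=0$ fiber is $C$ and $\F_\zeta|_{C}=f$ by the trivial-deformation choice on each component.

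For part (b), the first half — $C_{\zeta,s}$ smooth for $s\neq0$ and $f_{\zeta,s}\in\M_\la$ — follows because each local smoothing $\{wt=\zeta^i_j s\}$ is smooth for $s\neq0$ and the map $f_{\zeta,s}$ is automatically $V_\la$-regular with the correct Euler characteristic: the only ramification is forced over the moving points $Q^i(\la)$ (inherited from the ramification of $f_1,f_2$ away from the nodes, transported by the trivial deformation) and the contact data is unchanged, so by construction $f_{\zeta,s}$ has the Euler characteristic $\chi$ dictated by \eqref{SumEulerC}. Moreover $f_{\zeta,0}=f\in\M_{m,0}$, so $f_{\zeta,s}$ lies in the connected component $\Z_{m,f}$, i.e. $f_{\zeta,s}\in\Z_{m,f,\la}$; this gives the inclusion $\bigcup_\zeta\{f_{\zeta,s}\}\subseteq\Z_{m,f,\la}$. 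For the reverse inclusion and the claim that the union is exactly $\Z_{m,f,\la}$, I count. There are $n^{2\ell}=(\prod m_j)^2$ vectors $\zeta$ (each of the $2\ell$ entries a choice among $n$ roots of unity), and I must check the $f_{\zeta,s}$ are pairwise distinct for $s\neq0$ — which follows because the local smoothing $\{wt=\zeta^i_j s\}$ determines $\zeta^i_j$ from the biholomorphism type of the pointed glued curve together with $f$ (the $n$ choices give $n$ genuinely different gluings of the two $n$-fold branches), so distinct $\zeta$ give non-isomorphic $V_\la$-regular maps. Hence $|\bigcup_\zeta\{f_{\zeta,s}\}|=(\prod m_j)^2$. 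By \eqref{number-fiber} we also have $|\Z_{m,f,\la}|=(\prod m_j)^2$. Combined with the inclusion already established, the two finite sets coincide, proving \eqref{SM-Fmoduli}.

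I expect the main obstacle to be the bookkeeping that the $f_{\zeta,s}$ are pairwise distinct (equivalently, that the natural map $\zeta\mapsto f_{\zeta,s}$ is injective), and the matching verification that every map in $\Z_{m,f,\la}$ arises this way rather than merely showing the cardinalities agree; the cleanest route is to invoke the Gluing Theorem of \cite{IP2} — which already asserts that the maps in $\Z_{m,f,\la}$ are obtained precisely by smoothing the $2\ell$ nodes $y^i_j$ in $m_j=n$ ways each — and then to identify the $n$ smoothing parameters at $y^i_j$ with the $n$ choices of $n$-th root $\zeta^i_j$ via the relation $wt=\zeta^i_j s$, $\la=s^n$. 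A secondary technical point is checking that the locally defined maps $\F_\zeta$ patch to a single holomorphic map on $\C_\zeta$ and that $\C_\zeta$ is genuinely smooth across the old nodes; both are local statements and reduce to the explicit coordinate model above, so they are routine once the model is written down. I would also remark, to make the $\la=s^n$ base change honest, that shrinking $\Delta$ as in Section~\ref{section3} is harmless and that $\varphi_\zeta$ is the pullback of $\rho$ under $s\mapsto s^n$ followed by the simultaneous smoothing of the $y^i_j$.
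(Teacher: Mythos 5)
Your construction is the same Schiffer-variation argument the paper uses: smooth each node $y^i_j$ via $u^i_jv^i_j=\zeta^i_j s$, base-change by $\la=s^n$ so the local $n$-th power map $(w,t,s)\mapsto(w^n,t^n,s^n)$ lands holomorphically in $\{uv=\la\}$, patch with the trivial deformation off the nodes, and finish by combining the limit $f_{\zeta,s}\to f$ with the cardinality $|\Z_{m,f,\la}|=n^{2\ell}$ from the Gluing Theorem (\ref{number-fiber}). The only cosmetic difference is that the paper first builds an intermediate $2\ell$-parameter family $\X\to\Delta_{2\ell}$ and obtains $\C_\zeta$ as the pullback $\delta_\zeta^*\X$, which makes the matching condition $(t^i_j)^n=\la$ explicit, whereas you smooth all nodes at once along the line $t^i_j=\zeta^i_j s$; both routes give the identical total space and map.
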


\begin{proof} The proof consists of 4 steps.

\medskip
\non
{\bf Step 1 :}
We first  show how to  construct the family of curves
$\rho:\D\to\Delta$    with $k+3$ sections.
For $i=1,2$, a neighborhood  of the node  $p^i\in D_i\cap E$
 can be regarded  as the union $U^i\cup V^i$ of the  two disks
$$
U^i \ =\  \{\,u^i\in\cx \,:\, |u^i|<1\,\}\ \subset\ D_i\ \ \ \ \
\mbox{and}\ \ \ \ \
V^i\ =\  \{\,v^i\in\cx \,:\, |v^i|<1\,\}\ \subset\ E
$$
with their origins identified. We may assume that the fixed points $q^1, \dots, q^{k+3}$ in $D_0$
described above (\ref{SumEulerC}) lie outside these sets. Consider the regions
\begin{align*}
  A^i \  &=\  \{\, (u^i,v^i,\la)\,\in\, U^i\times V^i \times \Delta \ : \  u^i v^i=\la\, \},\\
B   \  &= \ \overset{2}{\underset{i=1}{\tcup}}\, G^i\ \ \tcup\ \
\big[\, \big(\, D_0 \setminus \ \overset{2}{\underset{i=1}{\tcup}}\,  (\,U^i \ \tcup\  V^i\,) \, \big) \times \Delta\,
\big]
\end{align*}
where
$$
G^i\ =\
\big\{\,(u^i,\la)\in U^i\times \Delta\,:\,|u^i|>\sqrt{|\la|}\,\} \ \ \tcup\ \
\{\,(v^i,\la)\in V^i\times \Delta\,:\,|v^i|>\sqrt{|\la|}\ \big\}.
$$
We  obtain a smooth complex surface $\D$  by gluing $A^1,A^2$ and $B_0$
using the maps
\begin{equation}\label{Gluing1}
G^i \  \to\  A^i
\ \ \  \mbox{defined\ by}\ \ \
(u^i,\la) \to \big( \,u^i,  \tfrac{\la}{u^i}, \la\,\big)
\ \ \mbox{and}\ \
(v^i,\la) \to \big(\,\tfrac{\la}{v^i}, v^i, \la\,\big).
\end{equation}
Let  $\rho:\D\to \Delta$   be the projection to the last factor  and  define $k+3$ sections $Q^i$ of $\rho$ by
$$
Q^i(\la) \ =\  (q^i,\la).
$$

\non
{\bf Step 2 :} We  can similarly construct a family of curves over a $2\ell$-dimensional polydisk:
\begin{equation}\label{def-domain}
\varphi_{2\ell}: \X \ \to\  \Delta_{2\ell}\,=\,\{\,t=(t^1_1,t^2_1,\cdots,t^1_\ell,t^2_\ell)\in\cx^{2\ell} \,:\,|t^i_j|<1\,\}.
\end{equation}
For each node  $y^i_j\in C_i\cap E_j$, choose a neighborhood obtained from two disks
$$
U^i_j \ =\  \{\,u^i_j\in\cx \,:\, |u^i_j|<1\,\}\ \subset\ C_i\ \ \ \ \
\mbox{and}\ \ \ \ \
V^i_j \ =\  \{\,v^i_j\in\cx \,:\, |v^i_j|<1\,\}\ \subset\ E_j
$$
by identifying the origins.
Consider the regions
\begin{align*}
A^i_j &\ =\  \{\, (u^i_j,u^i_j,t)\,\in\, U^i_j\times V^i_j \times \Delta_{2\ell} \ : \ u^i_jv^i_j=t^i_j\, \}, \\
B_{2\ell}   &\ =\  \underset{i,j}{\tcup}\,G^i_j\  \ \tcup\  \
\big[\,\big(\,C \setminus \  \underset{i,j}{\tcup} \ (\,U^i_j \ \tcup \ V^i_j\,) \,\big) \times \Delta_{2\ell}\,\big]
^{\phantom{\displaystyle A}}
\end{align*}
where
$$
G^i_j\ =\
\big\{\,(u^i_j,t)\in U^i_j\times \Delta_{2\ell}\ :\ |u^i_j|>\sqrt{|t^i_j|}\, \ \big\} \ \ \tcup\ \
\big\{\,(v^i_j,t)\in V^i_j\times \Delta_{2\ell}\ :\ |v^i_j|>\sqrt{|t^i_j|}\,\ \big\}.
$$
We can then obtain a smooth complex manifold $\X$  of dimension $2\ell+1$ by gluing $\cup A^i_j$ and $B_{2\ell}$
with the maps
\begin{equation}\label{Gluing2}
G^i_j \ \to\  A^i_j
\ \ \ \mbox{defined\ by}\ \ \
(u^i_j,t)\ \to\ \big(\, u^i_j, \tfrac{t^i_j}{u^i_j}, t \,\big)
\ \  \mbox{and}\ \
(v^i_j,t)\ \to\ \big(\, \tfrac{t^i_j}{v^i_j}, v^i_j, t \,\big).
\end{equation}
Let $\varphi_{2\ell}:\X\to \Delta$   be the projection to the factor $t$.

\medskip

\non
{\bf Step 3 :}
Since $f_i$ and $f_0|_{E_j}$ have ramification index $m_j=n$ at $y^i_j$,
we may assume (after coordinates change) that on $U^i_j$ and $V^i_j$ the map $f$ can be written as
\begin{equation}\label{assumption-map}
U^i_j \ \to\ U^i\ \ \mbox{by}\ \ u^i_j \ \to\ (u^i_j)^n
\ \ \ \ \ \mbox{and}\ \ \ \ \
V^i_j \ \to\ V^i\ \ \mbox{by}\ \ v^i_j\ \to\ (v^i_j)^n.
\end{equation}
For each $i,j$, define a map
\begin{equation}\label{LocalMap1}
G^i_j\ \to\ G^i\ \ \ \ \ \mbox{by}\ \ \ \ \
(u^i_j,t)\ \to \big(\,(u^i_j)^n,(t^i_j)^n\,\big)
\ \  \mbox{and}\ \
(u^i_j,t)\ \to \big(\,(v^i_j)^n,(t^i_j)^n\,\big).
\end{equation}
On the other hand, for each $i,j$, we have a map
\begin{equation}\label{LocalMap2}
A^i_j\ \to\ A^i\ \ \ \mbox{defined\ by}\ \ \
(u^i_j,v^i_j,t)\ \to\ \big(\,(u^i_j)^n,(v^i_j)^n,(t^i_j)^n\,\big).
\end{equation}
These two maps (\ref{LocalMap1}) and (\ref{LocalMap2}) are glued together under the maps (\ref{Gluing1}) and
(\ref{Gluing2}). The glued map extends to a holomorphic map
$f_t:\X_t \to D_\la$ if and only if
\begin{equation}\label{matching}
(t^1_1)^n \ =\  (t^2_1)^n \ = \   \cdots\ = \ (t^1_\ell)^n \ = \ (t^2_\ell)^n \ =\  \la.
\end{equation}
There are $n^{2\ell}$ solutions $t$ of (\ref{matching}) and the extension map $f_t$ is given by
\begin{equation*}\label{LocalMap3}
(x,t)\ \to\  (f(x),\la)\ \ \ \ \ \mbox{on}\ \ \ \ \ \X_t \ -\  \tcup \,A^i_j.
\end{equation*}

\medskip
\non
{\bf Step 4 :}
For each vector $\zeta=(\zeta_1^1,\zeta_1^2,\cdots,\zeta_\ell^1,\zeta_\ell^2)$ where each $\zeta_j^i$
is a $n$-th root of unity, define
$$
\delta_\zeta : \Delta \to \Delta_{2\ell}\ \ \ \ \
\mbox{by}\ \ \ \ \
s\ \to\ (\,\zeta_1^1s,\zeta_1^2s,\zeta_2^1s,\zeta_2^2s,\,\cdots\,,\zeta_\ell^1s,\zeta_\ell^2s\,).
$$
The pull-back $\delta_\zeta^*\X$ gives a family of curves:
\begin{equation}\label{smooth-family}
\xymatrix{
 \C_\zeta= \delta_\zeta^*\X  \ar[rr]  \ar[d]_{\varphi_\zeta}
 &&  \X  \ar[d]^{\varphi_{2\ell}} \\
 \Delta  \ar[rr]^{\delta_\zeta}
 && \Delta_{2\ell}      }
\end{equation}
The central fiber is $C_{\zeta,0}=C$ and the general fiber $C_{\zeta,s}$ ($s\ne 0$) is smooth.
A neighborhood of the node $y^i_j$ of $C$ in $\C_\zeta$ can be viewed as
\begin{equation}\label{nbd-beforeBU}
\wh{A}^i_j \ =\  \{\,(u^i_j,v^i_j,s)\in \cx^3 \,: \, |u^i_j|<1,\ |v^i_j|<1,\ u^i_jv^i_j=\zeta^i_j s\,\}.
\end{equation}
It follows that the total space $\C_\zeta$ is a complex smooth surface.
Noting $\delta_\zeta(s)$ is a solution of (\ref{matching}) for $\la=s^n$, we obtain a holomorphic map
$\F_\zeta:\C_\zeta\to \D$ given by
\begin{equation}\label{Def-F}
\begin{array}{rllll}
(u^i_j,v^i_j,s)\ &\to\ \ &(\,(u^i_j)^n,(v^i_j)^n,s^n\,)\ \ \  &\mbox{on}\   &\wh{A}^i_j, \\
(x,s)\ &\to\ &(\,f(x),s^n\,)\ \ \ \ \  \ \ \ &\mbox{on}\   &\C_\zeta\ -\ \tcup\, \wh{A}^i_j.
\end{array}
\end{equation}

Since the restriction $\F_\zeta|_C=f$ by (\ref{assumption-map}) and (\ref{Def-F}),
it remains to show (\ref{SM-Fmoduli}). By our choice of fixed points $q^i$ on $D_0$,
each contact marked point $x^i_j$ of $f$ lies in $\C_\zeta - \cup \wh{A}^i_j$.
Thus, by (\ref{Def-F}), the pull-back $\F_\zeta^*Q^i$ of the section $Q^i$ of $\rho$ gives
a section $X^i_j$ of $\varphi_\zeta$ given by $X^i_j(s)=(x^i_j,s)$.
After marking the points $X^i_j(s)$ in $C_{\zeta,s}$,
the restriction map
\begin{equation*}\label{RestrictionMap}
f_{\zeta,s}\ =\ \F_\zeta|_{C_{\zeta,s}}:C_{\zeta,s}\ \to \ D_\la\ \ \ \ \ \mbox{where}\ \ \ \ \ \la=s^n\ne 0
\end{equation*}
has contact marked points
$X^i_j(s)$ over $Q^i(\la)$ with multiplicity $m^i_j$. This means $f_{\zeta,s}$ lies in the space $\M_\la$ in
(\ref{def-Mr}) for $\la=s^n$.
Therefore, noting  (i) $f_{\zeta,s}\to f$ as $s\to 0$ and
(ii) $|\Z_{m,f,\la}|=n^{2\ell}$  by (\ref{number-fiber}),
we conclude (\ref{SM-Fmoduli}). This completes the proof.
\end{proof}

\medskip


\vskip 1cm

\setcounter{equation}{0}
\section{Spin structure and parity}
\label{section5}
\bigskip

The aim of this section is to use a spin structure on a family of nodal curves \cite{C}
to show parity calculation in Proposition~\ref{P:Main-Parity} below.
Twisting bundle as in
(\ref{Bdl-induced}) below is a key idea for parity calculation.

\medskip

We first introduce a spin structure
on a family of nodal curves that is relevant to our discussion.
We refer to \cite{C} for the definition of spin structure and more details.
The relative dualizing sheaf $\omega_\rho$ of the family of curves $\rho:\D\to \Delta$  in (\ref{deg-target}) is
the canonical bundle $K_\D$ on the total space $\D$ since  $\D$  is smooth and $K_{\!\Delta}$ is trivial.
For each $\la\ne 0$, the restriction $K_\D|_{D_\la}$  is the canonical bundle $K_{D_\la}$  on $D_\la$ and
the restriction $K_\D|_{D_0}$ is the dualizing sheaf $\omega_{D_0}$ of the nodal curve $D_0=D_1\cup E\cup D_2$.
As described in Section~4,  $D_0$ is locally given by  $u^iv^i=0$ near each node $p^i$ in $D_i\cap E$ for $i=1,2$.
Then the local generators of $\omega_{D_0}$ are $du^i/u^i$ and $dv^i/v^i$ with a relation $du^i/u^i+dv^i/v^i=0$ (cf. page 82 of \cite{HM}).
This implies the restriction $\omega_{D_0}|_{D_i}=K_{D_i}\otimes \O(p^i)$.
On the other hand, $1/u^i$ is a local defining function for the divisor $-E$ on $\D$ near $p^i$. By restricting $1/u^i$ to $D_i$,
one can see that
$\O(-E)|_{D_i}=\O(-p^i)$.
Consequently, for $i=1,2$
\begin{equation}\label{res-can}
K_\D|_{D_i}\otimes \O(-E)|_{D_i}\ =\ \omega_{D_0}|_{D_i}\otimes \O(-p^i)\ =\ K_{D_i}.
\end{equation}

From Cornalba's construction (cf. pg. 570 of \cite{C}),  there are  a
line bundle $\N\to\D$ and  a homomorphism $\Phi:\N^2\to \omega_\rho=K_\D$ satisfying:
\begin{itemize}
\item
$\Phi$ vanishes identically on the exceptional component $E$ and $\N|_E = \O_E(1)$.
\item
Since $\Phi|_{E}\equiv 0$, there is an induced homomorphism
$\hat{\Phi}:\N^2\ \to \ K_\D\otimes \O(-E)$
such that $\Phi$ is the composition of
$\hat{\Phi}$ and tensoring with $\eta$:
\begin{equation}\label{d-i-h}
\Phi: \N^2\ \overset{\hat{\Phi}}{\longrightarrow}\  K_\D\otimes \O(-E)\
\overset{\otimes \eta}{\longrightarrow}\ K_\D
\end{equation}
where  $\eta$ is a section of $\O(E)$ with zero divisor $E$.
Then, for $i=1,2$, the restriction
$$
\hat{\Phi}|_{D_i} : (\N|_{D_i})^2\ \to\ K_\D|_{D_i}\otimes \O(-E)|_{D_i}=K_{D_i}
$$
is an isomorphism so that the restriction $N_i=\N|_{D_i}$ is a theta
characteristic on $D_i$.
\item
For each $\la\ne 0$, the restriction  $\Phi|_{D_\la}: (\N|_{D_\la})^2\to  K_{D_\la}$
is an isomorphism so that the restriction $N_\la= \N|_{D_\la}$ is a theta characteristic on $D_\la$.
\end{itemize}
The pair $(\N,\Phi)$ is a spin structure on $\rho:\D\to\Delta$ and
the restriction $\N|_{D_0}$ is a theta characteristic on the nodal curve $D_0$.

\begin{rem}\label{rem-parity}
Atiyah \cite{A} and Mumford \cite{M} showed that the parity of a theta characteristic on a smooth curve
is a deformation invariant.
Cornalba used the homomorphism $\Phi$ to extend Mumford's proof  to
the case of spin structure on a family of nodal curves (see pg. 580 of \cite{C}).
Thus, if $p_1,p_2$ and $p$ are the parities of $N_1,N_2$ and $N_\la$ ($\la\ne 0$),  then we have
$$
p \ \equiv\  p_1+p_2\ \  (mod\  2).
$$
\end{rem}

\medskip

Let $\varphi_\zeta:\C_\zeta \to \Delta$ be the family of curves  in Proposition~\ref{TwistingProp}.
Recall that the central fiber of $\varphi_\zeta$ is $C=C_1\cup C_0\cup C_2$ where $C_0=\sqcup_j E_j$ is a disjoint union of $\ell$ exceptional components $E_j$ and $C_i\cap E_j=\{y^i_j\}$ for $i=1,2$ and $1\leq j\leq \ell$.
Similarly as for (\ref{res-can}),
by restricting local defining functions, we have
\begin{equation}\label{tw-res1}
\O(\pm C_0)|_{C_i}\ =\ \O\big(\pm\underset{j}{\tsum} \,y^i_j\,\big)
\ \ (i=1,2)\ \ \ \ \
\mbox{and}\ \ \ \ \
\O(\pm C_0)|_{C_{\zeta,s}}\ =\ \O\ \  (s\ne 0).
\end{equation}
Since any fiber of $\varphi_\zeta$ is a principal divisor on $\C_\zeta$,
$\O(C)=\O$ and hence
$\O(C_0) = \O(-C_1-C_2)$. We also have
\begin{equation}\label{tw-res2}
\O(\pm C_0)|_{E_j}\ =\ \O(\mp (C_1+C_2))|_{E_j}\ =\ \O(\mp(y^1_j+y^2_j))\ =\ \O(\mp 2) \ \
(1\leq j\leq \ell).
\end{equation}

Let $f=(f_1,f_0,f_2)$ and $\F_\zeta:\C_\zeta\to\D$ be the maps in Proposition~\ref{TwistingProp}.
The  ramification divisor $R_{\F_\zeta}$ of $\F_\zeta$ has local defining functions given by the Jacobian of $\F_\zeta$,
so (\ref{Def-F}) shows
\begin{equation}\label{R-divisor}
R_{\F_\zeta}\ =\ \O(\,X_\zeta+(n-1)\,C )\ =\
\O(X_\zeta)
\end{equation}
where $X_\zeta= \sum_{i,j} (m^i_j-1) X^i_j$ and $X^i_j$ is the section of $\varphi_\zeta$ defined below (\ref{Def-F}).
Note that
\begin{itemize}
\item[(i)]
the ramification divisor of $f_i=\F_\zeta|_{C_i}$ ($i=1,2$) is  $R_{f_i}=X_\zeta|_{C_i}+ \sum_j (n-1) y^i_j$,
\item[(ii)]
the ramification divisor of $f_{\zeta,s}=\F_\zeta|_{C_{\zeta,s}}$ ($s\ne 0$) is
$R_{f_{\zeta,s}}=X_\zeta|_{C_{\zeta,s}}$.
\end{itemize}

Now, noting $n$ is odd, we twist the pull-back bundle $\F_\zeta^*\N$ by setting
\begin{equation}\label{Bdl-induced}
\L_\zeta\ =\ \F_\zeta^*\N \otimes \O\big(\,\tfrac12\, X_\zeta +  \tfrac{(n-1)}{2}\, C_0\,\big).
\end{equation}

The lemma below shows that the twisted line $\L_\zeta$ restricts to a theta characteristic on each fiber of $\varphi_\zeta$,
including the central fiber $C$.

\begin{lemma}\label{TC-induced}
Let $\L_\zeta$ be as above. Then, we have
\begin{itemize}
\item[(a)]
$\L_\zeta|_{E_j} = \O(1)$ for $1\leq j\leq \ell$,
\item[(b)]
$\L_\zeta|_{C_1} = L_{f_1}$,  $\L_\zeta|_{C_2} = L_{f_2}$ and $\L_\zeta|_{C_{\zeta,s}} = L_{f_{\zeta,s}}$ for $s\ne 0$
\end{itemize}
where $L_{f_1}$, $L_{f_2}$  and $L_{f_{\zeta,s}}$ are the theta characteristics
on $C_1$, $C_2$ and $C_{\zeta,s}$
defined  by (\ref{TwistedTheta}).
\end{lemma}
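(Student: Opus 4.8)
The plan is to compute the restriction of $\L_\zeta = \F_\zeta^*\N \otimes \O(\tfrac12 X_\zeta + \tfrac{n-1}{2}C_0)$ to each of the three types of fiber component separately, reducing everything to the identities already established: the relations (\ref{tw-res1}), (\ref{tw-res2}) for $\O(\pm C_0)$, Cornalba's properties of $(\N,\Phi)$, and the behaviour of the ramification divisor recorded in (\ref{R-divisor})(i)(ii). For part (a), I would restrict (\ref{Bdl-induced}) to $E_j$. Since $X_\zeta = \sum (m^i_j-1)X^i_j$ is supported on contact marked points lying in $\C_\zeta - \cup \wh A^i_j$, its restriction to $E_j$ is trivial, so $\L_\zeta|_{E_j} = (\F_\zeta^*\N)|_{E_j} \otimes \O(\tfrac{n-1}{2}C_0)|_{E_j}$. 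Now $\F_\zeta|_{E_j} = f_0|_{E_j}\colon E_j \to E$ has degree $n$, so $(\F_\zeta^*\N)|_{E_j} = (f_0|_{E_j})^*(\N|_E) = (f_0|_{E_j})^*\O_E(1) = \O_{E_j}(n)$ using Cornalba's $\N|_E = \O_E(1)$. By (\ref{tw-res2}), $\O(\tfrac{n-1}{2}C_0)|_{E_j} = \O(-(n-1))$, and $\O_{E_j}(n)\otimes\O_{E_j}(-(n-1)) = \O_{E_j}(1)$, giving (a).

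For the smooth-fiber case in (b), restrict (\ref{Bdl-induced}) to $C_{\zeta,s}$ with $s\ne 0$. Here $\O(C_0)|_{C_{\zeta,s}} = \O$ by the second half of (\ref{tw-res1}), so $\L_\zeta|_{C_{\zeta,s}} = (\F_\zeta^*\N)|_{C_{\zeta,s}} \otimes \O(\tfrac12 X_\zeta|_{C_{\zeta,s}}) = f_{\zeta,s}^*N_\la \otimes \O(\tfrac12\sum(m^i_j-1)X^i_j|_{C_{\zeta,s}})$. Since $N_\la = \N|_{D_\la}$ is a theta characteristic on $D_\la$ and $X_\zeta|_{C_{\zeta,s}} = R_{f_{\zeta,s}}$ by (ii), this is exactly the twisted theta characteristic (\ref{TwistedTheta}) associated to $f_{\zeta,s}$, i.e. $L_{f_{\zeta,s}}$. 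For the components $C_i$ ($i=1,2$), restrict (\ref{Bdl-induced}) to $C_i$: by the first half of (\ref{tw-res1}), $\O(\tfrac{n-1}{2}C_0)|_{C_i} = \O(\tfrac{n-1}{2}\sum_j y^i_j)$, and $(\F_\zeta^*\N)|_{C_i} = f_i^*N_i$ where $N_i = \N|_{D_i}$ is a theta characteristic on $D_i$. Thus $\L_\zeta|_{C_i} = f_i^*N_i \otimes \O(\tfrac12 X_\zeta|_{C_i} + \tfrac{n-1}{2}\sum_j y^i_j)$, and since $X_\zeta|_{C_i} + (n-1)\sum_j y^i_j = R_{f_i}$ by (i), while the ramification index of $f_i$ at $y^i_j$ is $m_j = n$, this matches the Hurwitz-twist definition (\ref{TwistedTheta}) for the $V_i$-regular map $f_i$ (the contact marked point $y^i_j$ contributing $\tfrac12(n-1)y^i_j$). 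Hence $\L_\zeta|_{C_i} = L_{f_i}$.

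The only genuinely delicate point is the bookkeeping over $C_i$: one must verify that the combination $\tfrac12 X_\zeta|_{C_i} + \tfrac{n-1}{2}\sum_j y^i_j$ is precisely the half-ramification twist $\sum_{\bullet}\tfrac12(\mathrm{mult}-1)x_\bullet$ prescribed by (\ref{TwistedTheta}) for $f_i$ as a $V_i$-regular map — i.e. that the $y^i_j$ are exactly the contact marked points of $f_i$ over $p^i$ with multiplicity $n$, so that each contributes $\tfrac12(n-1)y^i_j$, matching the $\tfrac{n-1}{2}$ coefficient coming from $\O(\tfrac{n-1}{2}C_0)|_{C_i}$, while the remaining contact points over the $q$'s are accounted for by $\tfrac12 X_\zeta|_{C_i}$. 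This is a straightforward consistency check given (\ref{tw-res1}), (\ref{R-divisor}) and the local description of $f$ in Step 3 of Proposition~\ref{TwistingProp}, but it is where all the normalizations have to line up. Everything else is a direct substitution of the displayed identities.
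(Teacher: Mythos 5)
Your argument matches the paper's (very terse) proof: restrict (\ref{Bdl-induced}) to each component and combine the restrictions (\ref{tw-res1}), (\ref{tw-res2}) with the identities (i)--(ii) for the ramification divisors and Cornalba's properties of $(\N,\Phi)$. The calculations for $\L_\zeta|_{C_i}$ and $\L_\zeta|_{C_{\zeta,s}}$ in (b) are exactly right, and the final bookkeeping over $C_i$ that you flag as the delicate step is indeed correct.

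One small but real slip in (a): you justify $X_\zeta|_{E_j}$ being trivial by noting that the contact marked points lie in $\C_\zeta - \tcup\,\wh{A}^i_j$, but that set is not disjoint from $E_j$ (the points over $q^{k+2}\in E$ lie on the $E_j$'s, away from the nodes). The correct reason is that those points carry coefficient $m^{k+2}_j-1=0$ in $X_\zeta$ because the partition over $q^{k+2}$ is the trivial one $(1^d)$, while the nonzero terms of $X_\zeta$ correspond to $q^1,\cdots,q^k$, whose preimages lie on $C_1\cup C_2$; hence $X_\zeta\cap E_j=\emptyset$. With that repaired, the rest of (a) --- $(\F_\zeta^*\N)|_{E_j}=\O_{E_j}(n)$ and $\O(\tfrac{n-1}{2}C_0)|_{E_j}=\O(-(n-1))$, giving $\O(1)$ --- goes through as you wrote.
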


\begin{proof}
(a) follows from  (\ref{tw-res2}) and the fact that each restriction map  $\F_\zeta|_{E_j}$ has degree $n$.
(b) follows from (\ref{tw-res1}), (i) and  (ii).
\end{proof}

\medskip

Observe that the relative dualizing sheaf $\omega_{\varphi_\zeta}$  is the canonical bundle
$K_{\C_\zeta}$ since $\C_\zeta$ is smooth. The Hurwitz formula and (\ref{R-divisor}) thus imply that
\begin{equation}\label{Can-family}
\omega_{\varphi_\zeta}\ =\ K_{\C_\zeta}\ =\ \F^*_\zeta K_\D\otimes \O(X_\zeta).
\end{equation}
Define a homomorphism
\begin{equation}\label{HomE2}
\hat{\Psi}_\zeta : \L_\zeta^2=\F_\zeta^*\N^2\otimes  \O(X_\zeta+(n-1)C_0)\ \to\ \
\F_\zeta^*(K_\D\otimes \O(-E)) \otimes \O(X_\zeta+(n-1)C_0)
\end{equation}
by $\hat{\Psi}_\zeta=\F_\zeta^*\hat{\Phi}\otimes Id$ where $\hat{\Phi}$ is the induced homomorphism in (\ref{d-i-h}).
Noting $\O(C)=\O$ and $\O(D_0)=\O$, by  (\ref{Def-F}) we have
\begin{equation*}\label{pull-back-E}
\F^*_\zeta\O(-E)\ =\ \F_\zeta^*\O(D_1+D_2)\  =\
\O(n(C_1+C_2))\ =\ \O(-nC_0).
\end{equation*}
Together with (\ref{Can-family}), this implies that
the right-hand side of (\ref{HomE2}) is $K_{\C_\zeta} \otimes \O(-C_0)$.
Now, define a homomorphism $\Psi_\zeta:\L_\zeta^2\to K_{\C_\zeta}$ to be the composition
\begin{equation}\label{def-hom}
\Psi_\zeta: \L_\zeta^2\ \overset{\hat{\Psi}_\zeta}{\longrightarrow}\ K_{\C_\zeta} \otimes \O(-C_0)\
\overset{\otimes \xi}{\longrightarrow}\ K_{\C_\zeta}
\end{equation}
where $\xi$ is a section of $\O(C_0)$ with zero divisor $C_0$.

\begin{lemma}\label{SS-induce}
$(\L_\zeta,\Psi_\zeta)$ is a spin structure on $\varphi_\zeta:\C_\zeta\to \Delta$.
\end{lemma}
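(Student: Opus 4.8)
The plan is to verify the two defining properties of a spin structure on a family of nodal curves in the sense of Cornalba \cite{C}: first, that $\Psi_\zeta:\L_\zeta^2\to K_{\C_\zeta}$ vanishes identically on each exceptional component $E_j$ of the central fiber while $\L_\zeta$ restricts to $\O_{E_j}(1)$ there; and second, that on the remaining components of the central fiber and on every general fiber $\Psi_\zeta$ restricts to an isomorphism onto the canonical bundle, so that $\L_\zeta$ restricts to a genuine theta characteristic. Since Lemma~\ref{TC-induced} already records $\L_\zeta|_{E_j}=\O(1)$ and the theta-characteristic restrictions on $C_1$, $C_2$, $C_{\zeta,s}$, the remaining work is purely about the homomorphism $\Psi_\zeta$ and its behaviour on each piece.

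First I would trace through the construction of $\Psi_\zeta$ as the composition (\ref{def-hom}) of $\hat{\Psi}_\zeta=\F_\zeta^*\hat{\Phi}\otimes \mathrm{Id}$ with tensoring by the section $\xi$ of $\O(C_0)$ whose zero divisor is $C_0$. Because $\hat{\Phi}:\N^2\to K_\D\otimes\O(-E)$ restricts to an \emph{isomorphism} on each $D_i$ and on each $D_\la$ (this is part of Cornalba's construction recalled just before the lemma), the pulled-back map $\hat{\Psi}_\zeta$ is an isomorphism onto $K_{\C_\zeta}\otimes\O(-C_0)$ along any part of $\C_\zeta$ lying over $D_1\cup D_2$ or over $D_\la$ — that is, along $C_1$, $C_2$ and every general fiber $C_{\zeta,s}$. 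On each of these subvarieties the section $\xi$ of $\O(C_0)$ is nonvanishing: by (\ref{tw-res1}), $\O(C_0)|_{C_i}=\O(\sum_j y^i_j)$ and $\xi|_{C_i}$ vanishes exactly at the nodes $y^i_j$, which are precisely the points these components share with $C_0$ — but the relevant statement for "spin structure" is that $\Psi_\zeta$ is an isomorphism on the smooth locus and, at the nodes, compatible with the local generators of $\omega$; alternatively, and more cleanly, on the general fiber $\O(C_0)|_{C_{\zeta,s}}=\O$ by (\ref{tw-res1}) so $\xi|_{C_{\zeta,s}}$ is a nowhere-zero constant and $\Psi_\zeta|_{C_{\zeta,s}}$ is an isomorphism $L_{f_{\zeta,s}}^2\xrightarrow{\ \sim\ }K_{C_{\zeta,s}}$, which is exactly the statement that $L_{f_{\zeta,s}}$ is a theta characteristic (consistent with Lemma~\ref{TC-induced}(b)). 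On $C_1$ and $C_2$ one argues as for (\ref{res-can}): restricting (\ref{def-hom}) and using $\O(C_0)|_{C_i}=\O(\sum_j y^i_j)$ together with $\hat{\Psi}_\zeta|_{C_i}$ being an isomorphism onto $K_{\C_\zeta}|_{C_i}\otimes\O(-\sum_j y^i_j)=K_{C_i}$ (by (\ref{res-can}) applied to $\F_\zeta$, i.e. $\F_\zeta^*(K_\D\otimes\O(-E))|_{C_i}\otimes\O(\sum y^i_j)=K_{C_i}$), shows $\Psi_\zeta|_{C_i}:L_{f_i}^2\to K_{C_i}$ is an isomorphism, the theta-characteristic property on $C_i$.

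Second I would check the vanishing on $E_j$. The key point is the factorization $\F_\zeta^*\O(-E)=\O(-nC_0)$ established just above (\ref{def-hom}), together with $\xi$ being a section of $\O(C_0)$ with zero divisor $C_0$ — restricted to $E_j$ this forces $\xi|_{E_j}$ to vanish, since $\O(C_0)|_{E_j}=\O(-2)$ by (\ref{tw-res2}) has no nonzero sections and the restriction of $\xi$ is literally the local defining section of $C_0$, which cuts out $E_j$ itself to first order. Hence the composite $\Psi_\zeta=(\otimes\xi)\circ\hat{\Psi}_\zeta$ vanishes identically on $E_j$, exactly mirroring Cornalba's requirement that $\Phi$ vanish on the exceptional component; and $\hat{\Psi}_\zeta$ being the $n$-fold-cover pull-back of $\hat{\Phi}$ keeps $\L_\zeta|_{E_j}=\O(1)$ by Lemma~\ref{TC-induced}(a). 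Finally I would note that $\Psi_\zeta$ globalizes: it is defined by the single global formula (\ref{def-hom}) on all of $\C_\zeta$, and its generic rank-one behaviour plus the node compatibility (inherited from that of $\Phi$ on $\D$ via the flat, locally monomial map $\F_\zeta$) is exactly the data of a spin structure. I expect the main obstacle to be the bookkeeping at the nodes $y^i_j$: one must confirm that under the local description (\ref{Def-F}), $(u,v,s)\mapsto(u^n,v^n,s^n)$, the pulled-back generators $du^i/u^i,\,dv^i/v^i$ of $\omega_{D_0}$ and the local generator of $\N$ match up so that $\Psi_\zeta$ has the correct local normal form $\ell^2\mapsto \ell^2$ respecting the node relation — this is where the parity of $n$ and the specific twist by $\tfrac{n-1}{2}C_0$ are used, and it is the one place a careful local calculation (rather than a formal bundle identity) is required.
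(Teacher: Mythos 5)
Your proposal follows essentially the same route as the paper's proof: check that $\Psi_\zeta$ vanishes on each exceptional component $E_j$ (because $\xi$ vanishes on $C_0$, which is also the paper's reason), and then use the fact that $\hat\Phi$ restricts to isomorphisms on $D_1$, $D_2$, $D_\la$ to see that pulling back by $\F_\zeta$ gives the required isomorphisms. Two remarks. First, a wording issue that matters: on $C_1$ and $C_2$ the Cornalba condition you must verify is that $\hat\Psi_\zeta|_{C_i}:(\L_\zeta|_{C_i})^2\to K_{C_i}$ is an isomorphism, \emph{not} $\Psi_\zeta|_{C_i}$; the latter cannot be an isomorphism since $\xi|_{C_i}$ vanishes at the $y^i_j$, and moreover $\Psi_\zeta|_{C_i}$ lands in $K_{\C_\zeta}|_{C_i}=K_{C_i}\otimes\O(\sum_j y^i_j)\neq K_{C_i}$. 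You clearly have the right idea earlier in the paragraph, but the sentence "shows $\Psi_\zeta|_{C_i}:L_{f_i}^2\to K_{C_i}$ is an isomorphism" mislabels the map; the paper carefully states this for $\hat\Psi_\zeta|_{C_i}$. Second, the "careful local calculation at the nodes" you flag as the main obstacle is not actually needed in the paper's argument: the oddness of $n$ and the twist by $\tfrac{n-1}{2}C_0$ have already been absorbed into the construction of $\L_\zeta$ in (\ref{Bdl-induced}), into Lemma~\ref{TC-induced}, and into the bundle identification $\F_\zeta^*(K_\D\otimes\O(-E))\otimes\O(X_\zeta+(n-1)C_0)=K_{\C_\zeta}\otimes\O(-C_0)$ established just before (\ref{def-hom}). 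Once those are in place, the present lemma follows formally from the pullback of $\hat\Phi$, with no separate node-by-node normal-form check required.
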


\begin{proof}
First, $\L_\zeta|_{E}=\O(1)$ by  Lemma~\ref{TC-induced}\,(a) and
$\Psi_\zeta$ vanishes identically on each exceptional component $E_j$ since $\xi=0$ on $C_0=\sqcup_j E_j$.
Second, since $\hat{\Phi}|_{D_i}$ is an isomorphism,  (\ref{tw-res1}) and (i) show that for $i=1,2$ the restriction
$$
\hat{\Psi}|_{C_i} = f_i^*(\hat{\Phi}|_{D_i})\otimes Id : (\L_\zeta|_{C_i})^2=f_i^*N_i^2\otimes \O(R_{f_i}) \to\
f_i^*K_{D_i}\otimes \O(R_{f_i})= K_{C_i}
$$
is an isomorphism. Lastly,
let $\la=s^n\ne 0$. Since $\Phi|_{D_\la}$ is an isomorphism, so is $\hat{\Phi}|_{D_\la}$.
Thus, by (\ref{tw-res1}), (ii) and the facts $K_\D|_{D_\la}=K_{D_\la}$ and $\O(-E)|_{D_\la}=\O$,
the restriction
$$
\hat{\Psi}_\zeta|_{C_{\zeta,s}}=f_{\zeta,s}^*\hat{\Phi}|_{D_\la}\otimes Id:
(\L_\zeta|_{C_{\zeta,s}})^2=f_{\zeta,s}^*N_\la^2\otimes\O(R_{f_{\zeta,s}})\ \to\
f_{\zeta,s}^*K_{D_\la}\otimes \O(R_{f_{\zeta,s}})=K_{C_{\zeta,s}}
$$
is an isomorphism. This implies that the restriction
$$
\Psi_\zeta|_{C_{\zeta,s}}:(\L_\zeta|_{C_{\zeta,s}})^2\ \to\  K_{C_\zeta}|_{C_{\zeta,s}}=K_{C_\zeta,s}
$$
is also an isomorphism.
Therefore, we conclude that $(\L_\zeta,\Psi_\zeta)$ is a spin structure on $\varphi_\zeta$.
\end{proof}

\medskip

The following is a key fact for the proof of Theorem~\ref{Main} in the Introduction.

\begin{prop}\label{P:Main-Parity}
Let $f=(f_1,f_0,f_2)$ and $f_{\zeta,s}$ be maps in Proposition~\ref{TwistingProp}. Then, for all $s\ne 0$
\begin{equation}\label{main-parity}
p(f_{\zeta,s}) \equiv\  p(f_1)+p(f_2)\ \ \ \ \ \mbox{(mod\  2)}.
\end{equation}
\end{prop}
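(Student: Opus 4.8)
The plan is to deduce the parity identity \eqref{main-parity} from the deformation
invariance of the parity of a spin structure on a family of curves, applied to the spin
structure $(\L_\zeta,\Psi_\zeta)$ on $\varphi_\zeta:\C_\zeta\to\Delta$ constructed in
Lemma~\ref{SS-induce}. Since $\C_\zeta$ is a smooth surface, $\Delta$ is connected, and
$(\L_\zeta,\Psi_\zeta)$ is a genuine spin structure on the whole family, Cornalba's
extension of the Atiyah--Mumford theorem (Remark~\ref{rem-parity}) applies verbatim:
the parity of the theta characteristic $\L_\zeta|_{C_{\zeta,s}}$ on the general fiber
$C_{\zeta,s}$ ($s\neq0$) equals the parity of the theta characteristic $\L_\zeta|_C$ on
the nodal central fiber $C=C_1\cup C_0\cup C_2$. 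Thus it suffices to (i) identify the
left-hand side of this equality with $p(f_{\zeta,s})$ and (ii) identify the right-hand
side with $p(f_1)+p(f_2)$.

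For step (i), Lemma~\ref{TC-induced}(b) gives $\L_\zeta|_{C_{\zeta,s}}=L_{f_{\zeta,s}}$,
the theta characteristic attached to the map $f_{\zeta,s}$ by \eqref{TwistedTheta}, so by
the definition \eqref{parity} the parity of $\L_\zeta|_{C_{\zeta,s}}$ is exactly
$p(f_{\zeta,s})$. For step (ii), the parity of a theta characteristic on the nodal curve
$C$ is computed from its restrictions to the components: because $\L_\zeta|_{E_j}=\O(1)$
by Lemma~\ref{TC-induced}(a), each exceptional component $E_j\cong\P^1$ contributes
$h^0(\O_{\P^1}(1))=2\equiv0$ and so does not affect the parity, while
Lemma~\ref{TC-induced}(b) identifies $\L_\zeta|_{C_1}=L_{f_1}$ and $\L_\zeta|_{C_2}=L_{f_2}$.
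The point is that in Cornalba's setup the ``parity'' of the spin structure on the reducible
central fiber is additive over the non-rational components (the rational bridges being
neutral), hence equals $p(f_1)+p(f_2)$ mod $2$. Combining (i) and (ii) yields
\eqref{main-parity}.

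\textbf{Main obstacle.}
The delicate point is step (ii): one must be careful about what ``the parity of a theta
characteristic on a nodal curve'' means and why it decomposes as a sum over components.
Concretely, $h^0(C,\L_\zeta|_C)$ is governed by the normalization sequence relating
sections on $C$ to sections on $C_1$, the $E_j$'s, and $C_2$ that agree at the nodes
$y^i_j$; one needs to check that the gluing conditions at the nodes contribute trivially
to the parity, which is precisely the content of Cornalba's treatment (pg.\ 580 of
\cite{C}) and is why $\Psi_\zeta$ vanishing on the $E_j$'s with $\L_\zeta|_{E_j}=\O(1)$
is the right normalization. I would invoke this directly rather than re-derive it: the
family $(\L_\zeta,\Psi_\zeta)\to\Delta$ has been arranged to satisfy exactly the hypotheses
of Cornalba's construction, so the deformation invariance of parity and the component-wise
additivity on the central fiber are available off the shelf. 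The only genuinely new
bookkeeping — the twisting \eqref{Bdl-induced} by $\tfrac12 X_\zeta+\tfrac{n-1}2 C_0$ and
the verification that it restricts correctly — has already been carried out in
Lemmas~\ref{TC-induced} and \ref{SS-induce}, so the proof of the proposition is short:
assemble these facts and cite Remark~\ref{rem-parity}.
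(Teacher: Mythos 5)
Your proof follows exactly the paper's route: Lemma~\ref{SS-induce} gives the spin structure $(\L_\zeta,\Psi_\zeta)$ on $\varphi_\zeta$, Cornalba's deformation invariance (Remark~\ref{rem-parity}) yields $h^0(\L_\zeta|_{C_{\zeta,s}})\equiv h^0(\L_\zeta|_{C_1})+h^0(\L_\zeta|_{C_2})$ (mod $2$), and Lemma~\ref{TC-induced}(b) translates this into $p(f_{\zeta,s})\equiv p(f_1)+p(f_2)$. One small caveat on your step (ii): the remark that each $E_j$ ``contributes $h^0(\O_{\P^1}(1))=2\equiv 0$ and so does not affect the parity'' is not actually the mechanism at work --- in Cornalba's framework the parity of a spin curve with exceptional components is \emph{defined} as the sum of $h^0$ over the non-exceptional components (with the $\P^1$'s carrying $\O(1)$ excluded by fiat), so the evenness of $h^0(\O_{\P^1}(1))$ is incidental rather than explanatory; as you yourself conclude in the ``main obstacle'' paragraph, the right move is to cite Cornalba's pg.~580 argument directly, which is precisely what the paper's proof does.
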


\begin{proof}
Since $(\L_\zeta,\Psi_\zeta)$ is a spin structure on $\varphi_\zeta$,
the Cornalba's proof, mentioned in Remark~\ref{rem-parity}, shows that for all $s\ne 0$
$$
h^0(\L_\zeta|_{C_{\zeta,s}})\ \equiv\ h^0(\L_\zeta|_{C_1})+h^0(\L_\zeta|_{C_2})
\ \ \ \ \ \mbox{(mod\ 2)}.
$$
This and Lemma~\ref{TC-induced}\,b prove (\ref{main-parity}).
\end{proof}

\vskip 1cm

\setcounter{equation}{0}
\section{Proof of Theorem~\ref{Main}}
\label{section6}
\bigskip

{\bf Proof of Theorem~\ref{Main} :}
Fix a spin structure $(\N,\Phi)$ on $\rho:\D\to \Delta$ given  in Section~\ref{section5}.
Consider the space $\M_{m,0}$ in (\ref{CenterModuli}) where $m$ is a partition of $d=3$.
In this case, by Lemma~\ref{L:CModuli} either $m=(1^3)$ or $m=(3)$. Note that both of them satisfy (\ref{assumption-n}).
Fix $\la\ne 0$ and let $f=(f_1,f_0,f_2)$ be a map in $\M_{m,0}$.
Then (\ref{SM-Fmoduli}) and (\ref{main-parity}) show that  for all $f_\mu\in \CZ_{m,f,\la}$
\begin{equation}\label{MainTask}
p(f_\mu)\ \equiv\ p(f_1)+ p(f_2)\ \ \ \mbox{(mod\ 2)}.
\end{equation}
Lemma~\ref{SHbyRelModuli} and (\ref{DecMr}) show that
\begin{equation}\label{MThmE1}
H_{(3)^k}^{h,p}\ =\ H_{(3)^k,(1^3)^3}^{h,p}\ =\
\frac{1}{(3!)^3}\,\Big(
\sum_{f\in \M_{(1^3),0}}  \sum_{f_\mu\in \Z_{(1^3),f,\la}}\!\!\!\!\! (-1)^{p(f_\mu)} \ +
\sum_{f\in \M_{(3),0}}  \sum_{f_\mu\in \Z_{(3),f,\la}}\!\!\!\!\! (-1)^{p(f_\mu)}\ \Big)
\end{equation}
By (\ref{number-fiber}) and  (\ref{MainTask}), (\ref{MThmE1}) becomes
\begin{equation}\label{MThmE2}
H_{(3)^k}^{h,p}\ =
\sum_{f=(f_1,f_0,f_2)\in \M_{(1^3),0}} \!\!\!\!\!\!\!\frac{(-1)^{p(f_1)+p(f_2)}}{(3!)^3} \ +
\sum_{f=(f_1,f_0,f_2)\in \M_{(3),0}}\!\!\!\!\!\!\! \frac{3^2(-1)^{p(f_1)+p(f_2)}}{(3!)^3}
\end{equation}
It then follows from Lemma~\ref{GluingMap} and (\ref{MThmE2}) that
\begin{align*}\label{MThmE3}
H_{(3)^k}^{h,p}\ &=\
\sum_{(f_1,f_0,f_2)\in \CP_{(1^3)}}\!\!\! \frac{(-1)^{p(f_1)+p(f_2)}}{(3!)^5}\ +
\sum_{(f_1,f_0,f_2)\in \CP_{(3)}} \!\!\!\frac{3^2(-1)^{p(f_1)+p(f_2)}}{(3!)^3}  \notag \\
&=\
\frac{1}{(3!)^3}
\sum_{f_1\in \M_{(1^3)}^1} (-1)^{p(f_1)} \sum_{f_2\in \M_{(1^3)}^2}  (-1)^{p(f_2)} +
\frac{3}{(3!)^2}
\sum_{f_1\in \M_{(3)}^1} (-1)^{p(f_1)} \sum_{f_2\in \M_{(3)}^2} (-1)^{p(f_2)}
\notag \\
&=\
3!\,H_{(3)^{k_1}}^{h_1,p_1}\cdot H_{(3)^{k_2}}^{h_2,p_2}\ +\
3\,H_{(3)^{k_1+1}}^{h_1,p_1}\cdot H_{(3)^{k_2+1}}^{h_2,p_2}
\end{align*}
where the second equality follows from Lemma~\ref{NumM0} and the last from  Lemma~\ref{SHbyRelModuli}.
\qed

\vspace{1cm}


\setcounter{equation}{0}
\section{Calculation }
\label{section7 }
\bigskip

The aim of this section is to show:

\begin{prop}\label{computation}\ \
${\displaystyle H_{(3)^k}^{h,\pm}\ =\ 3^{2h-2}\big[\,(-1)^k2^{k+h- 1}\pm 1\,\big]}$.
\end{prop}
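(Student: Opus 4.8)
The plan is to solve the recursion from Theorem~\ref{Main} together with the base cases supplied by \cite{EOP}. First I would record the two inputs. Taking $h_1=1$, $k_1=0$ and $h_2=0$, $k_2=0$ (or iterating) is not quite enough on its own; rather, the clean strategy is to use Theorem~\ref{Main} in the special form where one splits off a genus-$0$ or genus-$1$ piece. Specifically, set $h_2=0$, $k_2=0$, $p_2=+$ in (\ref{main1}): this gives
\begin{equation*}
H_{(3)^{k}}^{h,p}\ =\ 3!\,H_{(3)^{k}}^{h,p}\cdot H_{3}^{0,+}\ +\ 3\,H_{(3)^{k+1}}^{h,p}\cdot H_{(3)}^{0,+},
\end{equation*}
which together with the analogous identity splitting off a genus-$0$ odd-parity factor will pin down the small cases $H_3^{0,\pm}$, $H_{(3)}^{0,\pm}$ and, using the $h=1$ data of \cite{EOP} for $H_3^{1,\pm}$ and $H_{(3)}^{1,\pm}$, will fix all the ``seed'' values I need. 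I expect that the genus-$0$ numbers come out to $H_3^{0,+}=3^{-2}\cdot(2^{-1}+1)=\tfrac{1}{6}$-type expressions consistent with $3^{2h-2}[(-1)^k2^{k+h-1}\pm1]$ at $h=0$, and one checks these against Lemma~\ref{SHbyRelModuli} directly by enumerating the (very few) degree-$3$ covers of $\P^1$ with the prescribed branching.

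Next I would set up the induction properly. Fix the claimed formula $\Psi(h,k,\pm):=3^{2h-2}[(-1)^k2^{k+h-1}\pm1]$ and verify it satisfies the recursion (\ref{main1}): I need to check that for $h=h_1+h_2$, $k=k_1+k_2$, and $p\equiv p_1+p_2$,
\begin{equation*}
\Psi(h,k,p)\ =\ 3!\,\Psi(h_1,k_1,p_1)\,\Psi(h_2,k_2,p_2)\ +\ 3\,\Psi(h_1,k_1+1,p_1)\,\Psi(h_2,k_2+1,p_2).
\end{equation*}
This is a direct computation: write each $\Psi$ as $3^{2h_i-2}[(-1)^{k_i}2^{k_i+h_i-1}\pm_i 1]$, expand the right-hand side, and collect the powers of $3$ and $2$. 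The $3$-power bookkeeping is immediate since $3^{2h_1-2}\cdot3^{2h_2-2}=3^{2h-4}$ and the prefactors $3!$ and $3$ restore $3^{2h-2}$ in the appropriate terms; the $2$-power and sign bookkeeping is where one must be careful that the cross terms involving $\pm_1$ and $\pm_2$ combine to give exactly $\pm=\pm_1\pm_2$ (mod $2$), i.e.\ the even/odd parity multiplication rule. Then, since $\Psi$ satisfies the recursion and matches the seed values (the genus-$0$ and genus-$1$ cases verified above), and since the recursion plus seeds determine all $H_{(3)^k}^{h,p}$ uniquely, we conclude $H_{(3)^k}^{h,p}=\Psi(h,k,p)$.

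The main obstacle I anticipate is purely organizational rather than deep: one has to confirm that the recursion (\ref{main1}), when iterated, actually reaches every $(h,k,p)$ from the seed set, and that the seed values are themselves consistent (the $h=1$ values from \cite{EOP} must agree with what one gets by splitting $h=1$ as $1+0$). Concretely the delicate point is the parity/sign arithmetic in the cross terms: the term $3\,\Psi(h_1,k_1+1,p_1)\Psi(h_2,k_2+1,p_2)$ shifts both $k_i$ by one, producing an extra factor $(-1)^{1+1}2^{2}=4$ in the $2$-power count relative to the first term's $(-1)^0 2^0$-baseline, and one must see that $3!+3\cdot4\cdot(\text{ratio of }2\text{-powers})$ assembles correctly into $3^{2h-2}(-1)^k2^{k+h-1}$; I would handle this by factoring out $3^{2h-2}(-1)^k2^{k+h-1}$ from both terms on the right and checking the remaining scalar identity $1+\tfrac12=\tfrac32$ for the ``principal'' part and a similar $6\cdot(\pm_1)(\pm_2)+3\cdot(\text{something})$ check for the $\pm1$ part, using $(\pm_1)(\pm_2)=\pm$ in the group $\Z/2$. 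Once that scalar identity checks out, the proof is complete.
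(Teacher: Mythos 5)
Your high-level strategy---show the closed form $\Psi(h,k,p)=3^{2h-2}[(-1)^k2^{k+h-1}+\epsilon(p)]$ satisfies the recursion of Theorem~\ref{Main}, match the seed values, and conclude by uniqueness---is sound, and the algebraic check that $\Psi$ satisfies (\ref{main1}) does go through. Writing $a_i=(-1)^{k_i}2^{k_i+h_i-1}$ and $\epsilon_i=\pm1$, the right-hand side of (\ref{main1}) becomes $3^{2h-4}\big[6(a_1+\epsilon_1)(a_2+\epsilon_2)+3(-2a_1+\epsilon_1)(-2a_2+\epsilon_2)\big]=3^{2h-2}[2a_1a_2+\epsilon_1\epsilon_2]$, and since $2a_1a_2=(-1)^k2^{k+h-1}$ and $\epsilon_1\epsilon_2=\epsilon(p)$ this equals $\Psi(h,k,p)$. (Your trial scalar identity ``$1+\tfrac12=\tfrac32$'' does not match the actual cancellation---the $a_1a_2$-coefficients combine as $\tfrac13+\tfrac23=1$---but that is a bookkeeping slip, not a conceptual one.) Note also that your trial split $h_2=0$, $k_2=0$, $p_2=+$ is genuinely degenerate: since $H_3^{0,+}=\tfrac16$ and $H_{(3)}^{0,+}=0$, the equation reduces to the tautology $H=H$; the useful split is $h_2=1$, $k_2=0$ or $1$, which is what drives the induction.

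The real gap is in the seed values, and it is not a small one. You cite ``the $h=1$ data of \cite{EOP} for $H_3^{1,\pm}$ and $H_{(3)}^{1,\pm}$,'' but \cite{EOP} computes only the genus-one, \emph{odd-parity} spin Hurwitz numbers. The even-parity genus-one numbers $H_{(3)^k}^{1,+}$ are not available from that source, and obtaining them is the bulk of the paper's actual proof. The paper's route is: (i) compute $H_{(3)^0}^{0,+}$ and $H_{(3)^3}^{0,+}$ by direct enumeration and an ordinary Hurwitz count (Lemma~\ref{known}\,a,b); (ii) use these plus the recursion to pin down the two constants $\f_{(3)}(21)$ and $\f_{(3)}(3)$ appearing in the \cite{EOP} character formula, yielding $H_{(3)^k}^{1,-}$ (so even the ``odd-parity EOP data'' requires work to extract); (iii) run the recursion back to get $H_{(3)^k}^{0,+}$ for all $k$; (iv) take the separately known value $H_{(3)^0}^{1,+}=2$ from \cite{LP1}, set up a small system of equations at genus two that pins down $H_{(3)}^{1,+}=-1$, and thereby get all $H_{(3)^k}^{1,+}$. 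Only then does the induction on $h\ge 2$ run. Your proposal collapses steps (i)--(iv) into ``use \cite{EOP} and enumerate covers,'' which underestimates where the difficulty lies; without an argument for $H_{(3)^k}^{1,+}$ the induction cannot start, so as written the proof is incomplete.
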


\begin{proof}
The proof consists of four steps.

\medskip
\non
{\bf Step 1 :}
We first show the following facts which we use in the computation below.

\begin{lemma}\label{known}
$$
\begin{array}{ccc}
\hspace{-2cm}
(a)\ \ H_{(3)^0}^{0,+}=H_3^{0,+}=\tfrac{1}{3!}\ \ \ \  \ \ \  \
&
(b)\ \ H_{(3)^3}^{0,+}=-\tfrac13\ \ \ \ \  \  \ \
&
(c)\ \ H_{(3)^0}^{1,+}=H_3^{1,+}=2
\end{array}
$$
\end{lemma}

\begin{proof}
Consider the dimension zero space $\M^V_\chi(\P^1,3)$ where $V=\emptyset$.
The Euler characteristic $\chi=6$ by (\ref{RHformula}) and hence
the space contains only one map $f:C\to \P^1$ where $C$ is a disjoint union of three rational curves and
$|\Aut(f)|=3!$. This shows (a). Let
$(f,C)$ be a map in the dimension zero space $\M^V_{\chi,(3),(3),(3)}(\P^1,3)$.
Then $C$ is a connected curve of genus one  and
the theta characteristic
$L_f$ on $C$ defined by  (\ref{TwistedTheta})  is
\begin{align*}
L_f\
=\ \O(-2x_1+x_2+x_3)\ =\ \O(x_1-2x_2+x_3)\ =\ \O(x_1+x_2-2x_3)
\end{align*}
where $x_1,x_2$ and $x_3$ are ramification points of $f$. This implies $L_f^3=\O$ and hence
$L_f=\O$ because $L_f^2=L_f^3=\O$. We have $p(f)=1$.
Therefore,
$$
H_{(3)^3}^{0,+}\ =\ -H_{(3)^3}^{0}\ =\ - \tfrac13
$$
where $H_{(3)^3}^0$ denotes the (ordinary) Hurwitz number  which is calculated by using the character formula
(cf.  (0.10) of \cite{OP}).
By Proposition 9.2 of \cite{LP1},
the spin Hurwitz numbers $H_d^{h,p}$ are the dimension zero local invariants of spin curve
that count maps from possibly  disconnected domains.
Let $GW_d^{h,p}$ denote the dimension zero local invariants of spin curve that count maps from connected domains.
Then $H_d^{h,p}$ and $GW_d^{h,p}$ are related as follows:
$$
1+\sum_{d>0} H_d^{h,p}\, t^d\ =\ \exp\big(\sum_{d>0} GW_d^{h,p}\, t^d\big).
$$
Now, (c) follows from: $GW_1^{1,+}=1$, $GW_2^{1,+}=\frac12$ and $GW_3^{1,+}=\tfrac43$ (see Section 10 of \cite{LP1}).
\end{proof}

\medskip
\non
{\bf Step 2 :} In this step, we compute $H_{(3)^k}^{1,-}$.
For a spin curve of genus one with trivial theta characteristic,
it follows from the formula (3.12) of \cite{EOP} that
\begin{equation}\label{EOP}
H_{(3)^k}^{1,-}  \ =\  2^{-k} \left[\big(\,\f_{(3)}(21)\,\big)^k - \big(\f_{(3)}(3)\big)^k \right].
\end{equation}
Here  the so-called {\em central character} $\f_{(3)}$ can be written as
$\f_{(3)}= \frac13\,\p_3 + a_2\p_1^2 + a_1\p_1 + a_0$
for some $a_i\in\Q$ ($0\leq i\leq 2$) and the {\em supersymmetric functions} $\p_1$ and $\p_3$ are defined by
\begin{equation*}\label{ssf}
\p_1(m)  \ =\  d - \tfrac{1}{24}\ \ \ \ \ \ \
\mbox{and}\ \ \ \ \ \ \p_3(m)\  = \ \tsum_j\, m_j^3 - \tfrac{1}{240}
\end{equation*}
where $m=(m_1,\cdots,m_\ell)$ is a partition of $d$.
For $k=0,1$, (\ref{EOP}) shows
\begin{equation}\label{EOP01}
H_{(3)^0}^{1,-}\ =\ 0\ \ \ \ \ \ \ \mbox{and}\ \ \ \ \ \  \
H_{(3)}^{1,-}\ =\ -3.
\end{equation}
Lemma~\ref{known}\,b, (\ref{EOP01}) and the formula (\ref{main1}) give
\begin{equation}\label{EOP2}
H_{(3)^2}^{1,-} \ =\  3\, H_{(3)}^{1,-}\cdot H_{(3)^3}^{0,+} \ =\  3.
\end{equation}
By (\ref{EOP}),  (\ref{EOP01}) and (\ref{EOP2}) we conclude
\begin{equation}\label{character}
\f_{(3)}(21)\ =-\ 4\ \ \ \ \ \mbox{and}\ \ \ \ \ \f_{(3)}(3)\ =\ 2.
\end{equation}
Consequently, by (\ref{EOP}) and (\ref{character}), for $k\geq 0$ we have
\begin{equation}\label{InitialTinv}
H_{(3)^k}^{1,-}\ =\ (-1)^k 2^k - 1.
\end{equation}

\medskip
\non
{\bf Step 3 :} In this step, we compute $H_{(3)^k}^{h,+}$ for $h=0,1$.
For $k\geq 1$, (\ref{EOP01}) and  the formula (\ref{main1})  give
\begin{equation}\label{Tinv-h=0-k1}
H_{(3)^{k-1}}^{1,-} \ =\  3\,H_{(3)}^{1,-}\cdot H_{(3)^k}^{0,+}\ =\ -3^2\,H_{(3)^k}^{0,+}.\ \ \
\end{equation}
Combining Lemma~\ref{known}\,a and (\ref{Tinv-h=0-k1}) yields that for $k\geq 0$
\begin{equation}\label{Tinv-h=0}
H_{(3)^k}^{0,+} \ = \ -\tfrac{1}{3^2} \big(\,(-1)^{k-1}2^{k-1}-1\,\big).
\end{equation}
Lemma~\ref{known}\,c, (\ref{InitialTinv}), (\ref{Tinv-h=0}) and  the formula (\ref{main1}) show
$$
\begin{array}{l}
H_{(3)^0}^{2,+}\ =\ 3!\,H^{1,-}_{(3)^0}\cdot H^{1,-}_{(3)^0}  +
3\,H^{1,-}_{(3)}\cdot H^{1,-}_{(3)}\ =\ 27,_{\displaystyle \phantom{\sum}}\\
H_{(3)}^{2,+}\ =\ 3!\,H^{1,-}_{(3)^0}\cdot H^{1,-}_{(3)}  +
3\,H^{1,-}_{(3)}\cdot H^{1,-}_{(3)^2}\ =\ -27,_{\displaystyle \phantom{\sum}}\\
H_{(3)^0}^{2,+}\ =\
3!\,H^{1,+}_{(3)^0}\cdot H^{1,+}_{(3)^0}  + 3\,H^{1,+}_{(3)}\cdot H^{1,+}_{(3)}\ =\
24 + 3\,H^{1,+}_{(3)}\cdot {H^{1,+}_{(3)}},_{\displaystyle \phantom{\sum}}  \\
H_{(3)}^{2,+}\ =\
3!\,H^{1,+}_{(3)^0}\cdot H^{1,+}_{(3)}  +   3\,H^{1,+}_{(3)}\cdot H^{1,+}_{(3)^2}\ =\
12\,  H^{1,+}_{(3)}  +   3\,H^{1,+}_{(3)}\cdot H^{1,+}_{(3)^2},\\
H^{1,+}_{(3)^2}\ =\
3!\,H^{1,+}_{(3)^0}\cdot H^{0,+}_{(3)^2}  +  3\,H^{1,+}_{(3)}\cdot H^{0,+}_{(3)^3}
\ =\ 4  - {H^{1,+}_{(3)}}.^{\displaystyle \phantom{\sum}}
\end{array}
$$
It follows that $H^{1,+}_{(3)}=-1$.
Consequently, Lemma~\ref{known}\,c, (\ref{Tinv-h=0}) and the formula (\ref{main1}) give
\begin{equation}\label{Tinv-h=1}
H^{1,+}_{(3)^k} \ =\
3!\,H_{(3)^0}^{1,+}\cdot H_{(3)^k}^{0,+} + 3\,H_{(3)}^{1,+}\cdot H_{(3)^{k+1}}^{0,+} \ =\  (-1)^k2^k+1.
\end{equation}

\medskip
\non
{\bf Step 4 :} It remains to compute $H_{(3)^k}^{h,p}$ for $h\geq 2$.
The formula (\ref{main1}) gives
\begin{equation*}
H_{(3)^k}^{h,p}\  =\
3!\,H_{(3)^0}^{h-1,p}\cdot H_{(3)^k}^{1,+} +
3\,H_{(3)}^{h-1,p}\cdot H_{(3)^{k+1}}^{1,+}.
\end{equation*}
From this, we can deduce that for $h\geq 2$
\begin{align}
\left(
\begin{array}{l}
H_{(3)^k}^{h,p} \\ H_{(3)^{k+1}}^{h,p}
\end{array}
\right)
&=
\left(
\begin{array}{ll}
3!\,H_{(3)^k}^{1,+} & 3\,H_{(3)^{k+1}}^{1,+} \\
3!\,H_{(3)^{k+1}}^{1,+} & 3\,H_{(3)^{k+2}}^{1,+}
\end{array}
\right)
\left(
\begin{array}{l}
H_{(3)^0}^{h-1,p} \\ H_{(3)}^{h-1,p}
\end{array}
\right) \notag \\
&=
\left(
\begin{array}{ll}
3!\,H_{(3)^k}^{1,+} & 3\,H_{(3)^{k+1}}^{1,+} \\
3!\,H_{(3)^{k+1}}^{1,+} & 3\,H_{(3)^{k+2}}^{1,+}
\end{array}
\right)
\left(
\begin{array}{ll}
3!\,H_{(3)^0}^{1,+} & 3\,H_{(3)}^{1,+} \\
3!\,H_{(3)}^{1,+} & 3\,H_{(3)^2}^{1,+}
\end{array}
\right)^{h-2}
\left(
\begin{array}{l}
H_{(3)^0}^{1,p} \\ H_{(3)}^{1,p}
\end{array}
\right)  \label{deg3-final}
\end{align}
Therefore, (\ref{InitialTinv}), (\ref{Tinv-h=1}) and  (\ref{deg3-final}) complete the proof.
\end{proof}

\vskip 1cm

{\small

\medskip

\vspace{1.3cm}

\noindent {\em  Department of  Mathematics,  University of Central Florida, Orlando, FL 32816}

\medskip

\noindent {\em e-mail:}\ \ {\ttfamily junlee@mail.ucf.edu

}

\end{document}